\tikzstyle arrowstyle=[scale=1]
\tikzstyle directed=[postaction={decorate,decoration={markings,
    mark=at position .65 with {\arrow[arrowstyle]{stealth}}}}]
\tikzstyle reverse directed=[postaction={decorate,decoration={markings,
    mark=at position .65 with {\arrowreversed[arrowstyle]{stealth};}}}]
 \newtheorem{question}{Question}
 \newtheorem{thm}{Theorem}
\newtheorem{defn}[thm]{Definition}
\newtheorem{lem}[thm]{Lemma}
\newtheorem{obs}[thm]{Observation}
\newtheorem*{remark*}{Remark}
\newtheorem{definition}[thm]{Definition}
\newtheorem{lemma}[thm]{Lemma}
\newtheorem{proposition}[thm]{Proposition}
\newtheorem{remark}[thm]{Remark}
\DeclareMathOperator{\diam}{diam}
\newcommand{\supp}{\mathrm{supp}}
\newcommand{\per}{\mathrm{Per}}
\newcommand{\fix}{\mathrm{Fix}}
\newcommand{\crit}{\mathrm{Crit}}
\newcommand{\co}{\mathrm{CO}}
 \newcommand{\eps}{\varepsilon}
 \newcommand{\N}{\mathbb{N}}
 \newcommand{\set}[1]{\left\{#1\right\}}
 \def\P{{\mathcal P}}
\numberwithin{equation}{section}
\begin{document}


\title[Periodic points and shadowing property for $C_\lambda(I)$]{Periodic points and shadowing property for generic Lebesgue measure preserving interval maps}

\author{Jozef Bobok}
\author{Jernej \v Cin\v c}
\author{Piotr Oprocha}
\author{Serge Troubetzkoy}

\address[J.\ Bobok]{Department of Mathematics of FCE, Czech Technical University in Prague, 
Th\'akurova 7, 166 29 Prague 6, Czech Republic}
\email{jozef.bobok@cvut.cz}

\date{\today}

\subjclass[2020]{37E05, 37A05, 37B65, 37C20}
\keywords{Interval map, circle map, Lebesgue measure preserving, periodic points, Hausdorff dimension, upper box dimension, shadowing, generic properties}

\address[J.\ \v{C}in\v{c}]{Faculty of Mathematics, University of Vienna, Oskar-Morgenstern-Platz 1,
A-1090 Vienna, Austria. -- and -- Centre of Excellence IT4Innovations - Institute for Research and Applications of Fuzzy Modeling, University of Ostrava, 30. dubna 22, 701 03 Ostrava 1, Czech Republic}
\email{jernej.cinc@osu.cz}

\address[P.\ Oprocha]{AGH University of Science and Technology, Faculty of Applied Mathematics, 
al.\ Mickiewicza 30, 30-059 Krak\'ow, Poland. -- and -- 
Centre of Excellence IT4Innovations - Institute for Research and Applications of Fuzzy Modeling, University of Ostrava, 30. dubna 22, 701 03 Ostrava 1, Czech Republic}
\email{oprocha@agh.edu.pl}

\address[S.\ Troubetzkoy]{Aix Marseille Univ, CNRS, Centrale Marseille, I2M, Marseille, France
postal address: I2M, Luminy, Case 907, F-13288 Marseille Cedex 9, France}
\email{serge.troubetzkoy@univ-amu.fr}

\begin{abstract}
    We show that for the generic continuous maps of the interval and circle which preserve the Lebesgue measure it holds for each $k\ge 1$ that the set of periodic points of period $k$ is a Cantor set of Hausdorff dimension zero and of upper box dimension one. Furthermore, building on this result, we show that there is a dense collection of transitive Lebesgue measure preserving interval maps whose periodic points have full Lebesgue measure and whose periodic points of period $k$ have positive measure for each $k \ge 1$. 
	Finally, we show that the generic continuous maps of the interval which preserve the Lebesgue measure  
	satisfy the shadowing and periodic shadowing property.
\end{abstract}
\maketitle
\section{Introduction}

In what follows let a {\em residual} set be a dense $G_{\delta}$ set and we call a property {\em generic} if it is satisfied on at least a residual set of the underlying space.
The roots of studying generic properties in dynamical systems can be derived from the article by Oxtoby and Ulam from 1941 \cite{OxUl} in which they showed that for a finite-dimensional compact manifold with a non-atomic measure which is positive on open sets, the set of ergodic measure-preserving homeomorphisms is generic in the strong topology.
Subsequently, Halmos in 1944  \cite{Ha44},\cite{Ha44.1} introduced approximation techniques to a purely metric situation: the study of interval maps which are invertible almost everywhere and preserve the Lebesgue measure and showed that the generic invertible map is weakly mixing, i.e., has continuous spectrum.
Then, Rohlin in 1948 \cite{Ro48} showed that the set of (strongly) mixing measure preserving invertible maps is of the first category.
Two decades later, Katok and Stepin in 1967  \cite{KaSt}  introduced the notation of a speed of approximations. One of the notable applications of their method is the genericity of ergodicity and weak mixing for certain classes of interval exchange transformations.
One of the most outstanding result using approximation theory is the Kerckhoff, Masur, Smillie result on the existence of polygons for which the billiard flow is ergodic \cite{KeMaSm}, as well as its quantitative version by Vorobets \cite{Vo}.
Many more details on the history of approximation theory can be found in the surveys
\cite{BeKwMe}, \cite{ChPr},  \cite{Tr}.

In what follows we denote $I:=[0,1]$,  $\mathbb{S}^1$ the unit circle and $\lambda$ the Lebesgue measure on an underlying manifold. Our present study focuses on topological properties of generic non-invertible maps on the interval resp.\  circle preserving the Lebesgue measure $C_{\lambda}(I)$, resp.\  $C_{\lambda}(\mathbb{S}^1)$. For the rest of the paper we equip the two spaces with the uniform metric, which makes the spaces complete. The study of generic properties on $C_{\lambda}(I)$ was initiated in \cite{B} and continued recently in \cite{BT}. It is well known that every such map has a dense set of periodic points (see for example \cite{BT}).  Furthermore, except for the two exceptional maps $\mathrm{id}$ and $1-\mathrm{id}$, every such map has positive metric entropy. Recently, basic topological and measure-theoretical properties of generic maps from $C_\lambda(I)$ were studied in \cite{BT}. We say that an interval map $f$ is \emph{locally eventually onto (leo)} if for every open interval $J\subset I$ there exists a non-negative integer $n$ so that $f^n(J)=I$. This property is also sometimes referred in the literature as \emph{topological exactness}.
The $C_{\lambda}(I)$-generic function
\begin{enumerate}[(a)]
	\item is weakly mixing with respect to $\lambda$ \cite[Th. 15]{BT},
	\item is leo \cite[Th. 9]{BT},
	\item satisfies the periodic specification property \cite[Cor. 10]{BT},
	\item has a knot point at $\lambda$ almost every point \cite{B},
	\item maps a set of Lebesgue measure zero onto $[0,1]$ \cite[Cor. 22]{BT},
	\item has infinite topological entropy \cite[Prop. 26]{BT},
	\item has Hausdorff dimension = lower Box dimension = 1 $<$ upper Box dimension  = 2 \cite{SW95}.
\end{enumerate}
 It was furthermore shown that the set of mixing maps in $C_{\lambda}(I)$ is dense  \cite[Cor. 14]{BT} and in analogy to Rohlin's result \cite{Ro48} that this set is of the first category \cite[Th. 20]{BT}. 
 
 In this paper we delve deeper in the study of properties of generic Lebesgue measure preserving maps on manifolds of dimension $1$.
 Our choice of $C_{\lambda}(I)$ for further investigation is that they are one-dimensional versions of volume-preserving maps, or more broadly, conservative dynamical systems. On the other hand, they represent variety of possible one-dimensional dynamics as highlighted in the following.

\begin{remark*}Let $f$ be an interval map. The following conditions are equivalent.
\begin{itemize}
 \item[(i)] $f$ has a dense set of periodic points, i.e., $\overline{\per(f)}=I$.
 \item[(ii)] $f$ preserves a nonatomic probability measure $\mu$ with $\supp~\mu=I$. 
    \item[(iii)] There exists a homeomorphism $h$ of $I$ such that $h\circ f\circ h^{-1}\in C_{\lambda}(I)$.
\end{itemize}
\end{remark*}
To see the above equivalence it is enough to combine a few facts from the literature. The starting point is \cite{BM}, where the dynamics of interval maps with dense set of periodic points had been described; while this article is purely topological it easily implies that such maps must have non-atomic invariant measures with full support.
The Poincar\'e Recurrence Theorem and the fact that in dynamical system given by an interval map the closures of recurrent points and periodic points coincide \cite{CoHe80} provides connection between maps preserving 
a probability measure with full support and dense set of periodic points. Finally, for $\mu$ a non-atomic probability measure with full support the map $h\colon~I\to I$ defined as $h(x)=\mu([0,x])$ is a homeomorphism of $I$; moreover, if $f$ preserves $\mu$ then $h\circ f\circ h^{-1}\in C_{\lambda}(I)$ (see the proof of Theorem~\ref{t-PP} for more detail on this construction). Therefore, the topological properties that are proven in \cite{BT} and later in this paper are generic also for interval maps preserving measure $\mu$.

A basic tool to understand the dynamics of interval maps is to understand the structure, dimension and Lebesgue measure of the set of its periodic points.
For what follows let $f\in C_{\lambda}(I)$. Since generic maps from $C_{\lambda}(I)$ are weakly mixing with respect to $\lambda$ it holds that the Lebesgue measure on the periodic points is $0$.  However, it is still natural to ask: 

\begin{question}
	\textit{What is the cardinality, structure and dimension of periodic points for generic maps in $C_{\lambda}(I)$?}
\end{question}

Akin et.~al.~proved in \cite[Theorems 9.1 and 9.2(a)]{AHK} that the set of periodic points of generic homeomorphisms of $\mathbb{S}^1$ is a Cantor set.
In an unpublished sketch, Guih\'eneuf showed that the set of periodic points of a generic
volume preserving homeomorphism $f$
of a manifold of dimension at least two (or more generally preserving a good
measure in the sense of Oxtoby and Ulam \cite{OxUl})
 is a dense set of measure zero and for any $\ell \ge 1$ the set of fixed points of $f^{\ell}$  is either empty or a perfect set  \cite{PAG}.
On the other hand, Carvalho et.~al.~have shown that  the upper box dimension of the set of periodic points  is full for generic homeomorphisms on compact manifolds of dimension at least one  \cite{PV}\footnote{this statement only appears in the published version of \cite{PV}.}.

In the above context we provide the general answer about the cardinality and structure of periodic points of period $k$ for $f$ (denoted by $\per(f,k)$), of fixed points of $f^k$ (denoted by $\fix(f,k)$) and of the union of all periodic points of $f$ (denoted by $\per(f)$) and its respective lower box, upper box and Hausdorff dimensions. Namely, we prove:

\begin{thm}\label{t8}
	For a generic map $f \in C_{\lambda}(I)$,  for each $k \geq 1$:
	\begin{enumerate}
		\item \label{pp1} the set $\fix(f,k)$ is  a Cantor set, 
		\item  \label{pp2} the set $\per(f,k)$  is  a Cantor set,  
		\item \label{pp3} the set $\fix(f,k)$ has Hausdorff dimension and  lower box dimension zero. In particular, $\per(f,k)$ has Hausdorff dimension and lower box dimension zero.
		\item \label{pp4}  the set $\per(f,k)$ has upper box dimension one. Therefore, $\fix(f,k)$ has upper box dimension one as well.
		\item \label{pp5} the Hausdorff dimension of $\per(f)$ is zero.
	\end{enumerate}
\end{thm}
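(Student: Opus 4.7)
The plan is to combine a Baire-category argument with an explicit multi-scale perturbation construction. For each $k\geq 1$ I would express the conjunction of properties (1)--(4) as a countable intersection of open dense subsets of $C_\lambda(I)$; property (5) then drops out from (3) via $\per(f)=\bigcup_{k\geq 1}\fix(f,k)$, since a countable union of Hausdorff-dimension-zero sets has Hausdorff dimension zero. Nonemptiness of $\fix(f,k)$ is free because the $C_\lambda(I)$-generic map satisfies periodic specification by \cite[Cor.~10]{BT} and so has periodic points of every period.

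For the $G_\delta$ presentation I use the standard countable-scale unravelings: nowhere density of $\fix(f^k)$ as $\bigcap_n\{f:\fix(f^k)\text{ contains no interval of length }1/n\}$; perfectness as a countable intersection of conditions asserting that at every rational scale every fixed point of $f^k$ has a nearby transverse companion fixed point; simultaneous Hausdorff and lower box dimension zero as $\bigcap_n\{f:\exists\,\delta<1/n,\ N(\fix(f^k),\delta)<\delta^{-1/n}\}$, since a single-scale estimate of this form upper-bounds both dimensions by $1/n$; and upper box dimension one as $\bigcap_n\bigcap_m\bigcup_{\delta<1/m}\{f:N(\per(f,k),\delta)>\delta^{-(1-1/n)}\}$. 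Openness of the inner sets is verified by elementary continuity/compactness, leveraging the fact that transverse diagonal crossings of the graph of $f^k$ persist under $C^0$-small perturbations (implicit function theorem).

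The density arguments carry the substance of the proof. Starting from the well-known density in $C_\lambda(I)$ of piecewise-linear Markov maps with all slopes of absolute value $\geq 2$ (the workhorse approximation of \cite{B,BT}), I would perturb a chosen backbone by attaching arbitrarily small symmetric $\wedge$-shaped bumps built from pairs of linear pieces of the same slope magnitude as the backbone; because the bumps come in balanced pairs they are automatically $\lambda$-preserving, and each well-placed bump contributes a transverse crossing of the diagonal by the graph of $g^k$. To realize the four generic conditions one chooses: $\gtrsim\delta^{-(1-1/n)}$ bumps clustered at a single fine scale $\delta$ (for upper box dimension one); an intermediate scale at which only $O(1)$ diagonal crossings are resolved (for the lower box inequality, which also bounds Hausdorff dimension); bumps arranged in a nested Cantor-like cascade, with finer and finer generations accumulating on every existing fixed point (for perfectness); and uniformly expanding slopes so that no bump can create a whole interval of fixed points (for nowhere density).

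The principal obstacle is the Lebesgue-preservation constraint, which forbids arbitrary $C^0$-perturbations: every local modification must be compensated elsewhere so that $g_\ast\lambda=\lambda$. I would manage this by remaining inside a rigid piecewise-linear family where $\lambda$-preservation reduces to a simple combinatorial balance between widths and heights of the linear pieces, an identity satisfied tautologically by the symmetric bumps above. A secondary subtlety is that the density proofs for upper box dimension one and lower box dimension zero pull in opposite directions, but because they appear as \emph{separate} open dense conditions each density argument only needs to witness a single scale at a time; Baire then combines them into a single residual set exhibiting the two dimensional behaviours at interleaved scales.
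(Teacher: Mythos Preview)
Your outline has a genuine gap in the openness claims. You assert that sets like
\[
\{f:\exists\,\delta<1/n,\ N(\fix(f,k),\delta)<\delta^{-1/n}\}
\quad\text{and}\quad
\{f:\fix(f,k)\text{ contains no interval of length }1/n\}
\]
are open in $C_\lambda(I)$, justifying this by ``persistence of transverse crossings''. But transversality only guarantees that existing crossings survive a perturbation; it does \emph{not} prevent new crossings from appearing. If the graph of $f^k$ grazes the diagonal (or merely comes close to it) somewhere outside your covering intervals, an arbitrarily small $C^0$-perturbation can produce many new fixed points there, destroying both the covering-number bound and the ``no interval of fixed points'' condition. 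The same problem afflicts your $G_\delta$ formulation of perfectness: a perturbation can create a brand-new isolated fixed point of $f^k$ far from the old ones. None of these sets is open as written; to salvage them you would need to build in an explicit quantitative gap condition of the form $|f^k(x)-x|>\rho$ outside the covering intervals, and then carry that extra parameter through both the openness and the density arguments.

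The paper avoids this difficulty by not attempting to write each property as an intrinsic $G_\delta$ condition at all. Instead it first proves a preparatory lemma producing a dense family $\{g_i\}\subset\mathrm{PA}_\lambda(I)$ for which every point of $\fix(g_i,k)$ is transverse and no critical orbit of length $\le k$ hits a critical point (this ``no critical connection'' step is itself delicate and absent from your sketch). From each $g_i$ it then builds $h_i$ by regular $(2n_i+1)$-fold window perturbations around one representative of each periodic orbit, and takes the residual set to be $G=\bigcap_j\bigcup_{i\ge j}B(h_i,\delta_i)$ with $\delta_i$ chosen small enough that $\fix(f,k)\subset B(\fix(h_i,k),\zeta_i)$ for every $f\in B(h_i,\delta_i)$. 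This last inclusion is exactly the ``gap'' information you are missing: it simultaneously controls where fixed points of the perturbed map can lie (giving nowhere-density and the upper bound on covering numbers) and guarantees the required multiplicity inside each window (giving perfectness and the lower bound on covering numbers). All five conclusions are then read off for $f\in G$ directly from the explicit combinatorics of the $h_i$.
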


The proof of the above theorem works also for the generic continuous maps which by our knowledge is not known in the literature yet.
Furthermore, we can also address the setting of $C_{\lambda}(\mathbb{S}^1)$, however, due to the presence of rotations, we need to treat degree $1$ maps separately (for the related statement of the degree one case we refer the reader to Theorem~\ref{thm:C_p}).

Related to the study above, there is an interesting question about the possible Lebesgue measure on the set of periodic points for maps from $C_{\lambda}(I)$. 

\begin{question}\label{q:B}
	\textit{Does there exist a transitive (or even leo) map in $C_{\lambda}(I)$ with positive Lebesgue measure on the set of periodic points?}
\end{question}

As mentioned already above, generic maps from $C_{\lambda}(I)$ will have Lebesgue measure $0$ since  $\lambda$ is weakly mixing. Therefore, the previous question asks about the complement of generic maps from $C_{\lambda}(I)$ and requires on the first glance contradicting properties. The discrepancy between topological and measure theoretical aspect of dynamical systems again comes to display and we obtain the following result. We answer Question~\ref{q:B} and even prove a stronger statement.

\begin{thm}\label{t-PP}
	The set of leo maps in $C_{\lambda}(I)$
	whose periodic points have full Lebesgue measure and whose periodic points of period $k$ have positive measure for each $k \ge 1$
	is dense in $C_{\lambda}(I)$.
\end{thm}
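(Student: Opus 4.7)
Given $f_0 \in C_\lambda(I)$ and $\varepsilon > 0$, the plan is to construct a leo $g \in C_\lambda(I)$ with $\|g - f_0\|_\infty < \varepsilon$, $\lambda(\per(g, k)) > 0$ for every $k \ge 1$, and $\lambda(\per(g)) = 1$. By the density of leo maps in $C_\lambda(I)$ (Corollary~14 of \cite{BT}) I first replace $f_0$ by a leo $f$ with $\|f - f_0\|_\infty < \varepsilon/2$, and after a further small perturbation I may assume $f$ is a piecewise linear leo Markov map with small-mesh partition. The target $g$ is built as the uniform limit of a sequence $g_n \in C_\lambda(I)$, $g_0 := f$, with $\|g_n - g_{n-1}\|_\infty < 2^{-n-1}\varepsilon$, each $g_n$ leo, where at stage $n$ a fat Cantor subset $F_n$ of period-$n$ points for $g_n$ is inserted on a small interval $J_n$, the $J_n$'s chosen pairwise disjoint and of rapidly decreasing size.

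The core ingredient is a local block construction: given a small subinterval $J \subset I$ on which the current map is already full ($g_{n-1}(J) = I$, achievable because $g_{n-1}$ is a leo Markov map) and a period $k$, one modifies $g_{n-1}|_J$ to a new continuous map $\psi \colon J \to I$ matching $g_{n-1}$ at $\partial J$, with the same pushforward $(g_{n-1}|_J)_*\lambda|_J$ (ensuring global $\lambda$-preservation of $g$), satisfying $\psi(J) = I$, and carrying a fat Cantor subset $E \subset J$ of period-$k$ points. Concretely, partition $J$ into $k$ equal pieces $J_1, \ldots, J_k$, choose a fat Cantor set $E_1 \subset J_1$, and set $E_i := E_1 + (i-1)\lambda(J)/k \subset J_i$; on $E := \bigcup_i E_i$ define $\psi$ by the cyclic translation $x \mapsto x + \lambda(J)/k$ (slope $+1$, with $E_k$ wrapping to $E_1$ via $x \mapsto x - (k-1)\lambda(J)/k$), so that $\psi(E) = E$ and $\psi^k|_E = \mathrm{id}$, and every point of $E$ has $g$-period exactly $k$. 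On the open gaps of $J \setminus E$, $\psi$ is defined by piecewise linear tents whose slopes and heights are simultaneously tuned so that the combined pushforward of $\lambda|_J$ matches that of $g_{n-1}|_J$, the boundary values at $\partial J$ are preserved, and $\psi(J) = I$.

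Replacing $g_{n-1}$ on $J_n$ by this period-$n$ block yields $g_n \in C_\lambda(I)$, still leo: the only new forward-invariant set is the fat Cantor $E \subset J_n$ (nowhere dense, hence compatible with leo), and $g_n(J_n) = I$ together with the leo property of $g_{n-1}$ outside $J_n$ ensures that every open interval eventually maps under $g_n$ to contain $J_n$, whence one more iterate covers $I$. The total perturbation is summable, so $g := \lim g_n$ lies in $C_\lambda(I)$ (closed under uniform limits) with $\|g - f\|_\infty < \varepsilon/2$; the pairwise disjointness and rapid shrinkage of the $J_n$'s ensures each $F_n$ is preserved in the limit, giving $\lambda(\per(g,k)) \ge \lambda(F_k) > 0$ for each $k$, and by choosing the Cantor measures so that $\sum_n \lambda(F_n) = 1$ one obtains $\lambda(\per(g)) = 1$. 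The main obstacle is the local block construction itself: reconciling $\lambda$-preservation (matching the pushforward of $g_{n-1}$), continuity with boundary matching, $\psi(J) = I$ (leo-compatibility), and the presence of a fat Cantor set $E$ of period-$k$ points (which must be nowhere dense, as leo prohibits intervals of periodic points) forces the tent slopes on the gaps of $E$ to blow up near $E$, and the careful calibration of these slopes to satisfy all four constraints simultaneously is the technical heart of the proof.
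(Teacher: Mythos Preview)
Your local block construction runs into a hard obstruction coming from Lebesgue invariance itself. If $g_n\in C_\lambda(I)$ and the fat Cantor set $E\subset J_n$ satisfies $g_n(E)=E$, then $E\subset g_n^{-1}(E)$ together with $\lambda(g_n^{-1}(E))=\lambda(E)$ forces $g_n^{-1}(E)=E$ modulo a null set: a forward-invariant set of positive measure is automatically \emph{fully} invariant for a $\lambda$-preserving map. In pushforward language, for any map in $C_\lambda(I)$ the Radon--Nikodym density of $(g_{n-1}|_{J_n})_*(\lambda|_{J_n})$ with respect to $\lambda$ is everywhere $\le 1$, whereas your slope-$1$ cyclic translation on $E$ already contributes density $\mathbf{1}_E$ to $(\psi)_*(\lambda|_{J_n})$, and the tents on the gaps only add a further nonnegative amount. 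Matching the two pushforwards therefore forces the density of $(g_{n-1}|_{J_n})_*(\lambda|_{J_n})$ to equal $1$ on all of $E$, i.e.\ every $g_{n-1}$-preimage of every point of $E$ must already lie in $J_n$. This is a global condition on $g_{n-1}$ that you have not arranged, and it fails outright in the most natural reading of your setup (e.g.\ when $J_n$ is a small Markov interval on which $g_{n-1}$ is a single affine branch onto $I$, so that the density is the constant $\lambda(J_n)<1$). So the four constraints you list at the end are not merely delicate to reconcile; they are incompatible as stated. There are two further gaps: with the $J_n$ pairwise disjoint and each $F_n$ a nowhere dense proper subset of $J_n$ one has $\sum_n\lambda(F_n)<\sum_n\lambda(J_n)\le 1$, so $\lambda(\per(g))=1$ cannot be reached by this bookkeeping; and leo is not closed under uniform limits, so a separate argument would be needed for $g=\lim g_n$.

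The paper's route is entirely different and never inserts periodic points by hand. One fixes an $f\in C_\lambda(I)$ that is leo with $\per(f,k)$ a Cantor set for every $k$ (generic by Theorem~\ref{t8}). The periodic specification property together with Sigmund's theorem gives $f$-invariant measures on periodic orbits weak$^*$-approximating $\lambda$; using the Cantor structure of $\per(f,k)$ these are thickened to non-atomic $f$-invariant measures supported on periodic points, and a suitable convex combination $\nu$ is non-atomic, fully supported, $f$-invariant, with $\nu(\per(f))=1$. The homeomorphism $h(x)=\nu([0,x])$ then conjugates $f$ to $g=h\circ f\circ h^{-1}\in C_\lambda(I)$ with $\lambda(\per(g))=\nu(\per(f))=1$; leo survives as a conjugacy invariant, and $\rho(f,g)$ is controlled by the Prohorov distance $D(\nu,\lambda)$, which gives the required density in $C_\lambda(I)$.
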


Another motivation for the study in this paper was the following natural question. 

\begin{question}\label{q3}
	\textit{Is shadowing property generic in $C_{\lambda}(I)$?}
\end{question}

Shadowing is a classical notion in topological dynamics and it serves as a tool to determine whether any hypothetical orbit is actually close to some real orbit of a topological dynamical system; this is of great importance in systems with sensitive dependence on initial conditions, where small errors may potentially result in a large divergence of orbits.
Pilyugin and Plamenevskaya introduced in \cite{Pi} a nice technique to prove that shadowing is generic for homeomorphisms on any smooth compact manifold without a boundary. This led to several subsequent results that shadowing is generic in topology of uniform convergence, also in dimension one (see \cite{Med, KoMaOpKu} for recent results of this type). On the other hand, there are many cases known, when shadowing is not present in an open set in $C^1$ topology (see survey paper by Pilyugin \cite{PilSur} and \cite{PilBook,PilBook2} for the general overview on the recent progress related with shadowing).

For continuous maps on manifolds of dimension one, Mizera proved that shadowing is indeed a generic property \cite{Mizera}. In the context of volume preserving homeomorphisms on manifolds of dimension at least two (with or without boundary), the question above was solved recently in the affirmative by Guih\'eneuf and  Lefeuvre \cite{GuLe18}. 

Our last main theorem provides the affirmative answer on Question~\ref{q3}.

\begin{thm}\label{t-pshadow}
	Shadowing and periodic shadowing are generic properties for maps from $C_{\lambda}(I)$.
\end{thm}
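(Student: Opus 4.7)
The plan is a standard Baire-category argument. For each $\varepsilon > 0$, let $V_\varepsilon \subset C_\lambda(I)$ be the set of $f$ for which there exist $\delta > 0$ and a uniform neighborhood $W \ni f$ in $C_\lambda(I)$ such that every $g \in W$ has the property that every $\delta$-pseudo-orbit of $g$ is $\varepsilon$-shadowed by some orbit of $g$. Define $V_\varepsilon^{\mathrm{per}}$ analogously by requiring periodic pseudo-orbits to be shadowed by periodic orbits. Each of $V_\varepsilon$, $V_\varepsilon^{\mathrm{per}}$ is open in $C_\lambda(I)$ by construction, and
$$\bigcap_{n\ge 1}\bigl(V_{1/n}\cap V_{1/n}^{\mathrm{per}}\bigr)$$
is contained in the set of maps with both shadowing and periodic shadowing. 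The theorem thus reduces to showing density of every $V_\varepsilon$ and $V_\varepsilon^{\mathrm{per}}$ in $C_\lambda(I)$.

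To establish density, I would approximate any $f \in C_\lambda(I)$, within a prescribed uniform error, by a piecewise linear leo map $f_0 \in C_\lambda(I)$ whose slopes all have absolute value at least some $\sigma > 1$. The existence of such approximants builds on the piecewise linear leo density machinery already developed in \cite{BT}, augmented by a controlled subdivision of monotonicity intervals that forces all slopes to exceed $\sigma$ while preserving Lebesgue measure (measure preservation reduces to the combinatorial equality $\sum 1/|s_i| = 1$ summed over the branches lying above any given point, and can be arranged by matching an even number of branches of the right widths). After passing to a suitable iterate if necessary, $f_0$ is Markov with respect to its monotonicity partition with the full-branch property. Uniform expansion then yields shadowing for $f_0$ through the classical symbolic-coding argument: any sufficiently slow pseudo-orbit determines an itinerary whose cylinders shrink geometrically, and their intersection is the unique shadowing orbit; periodic pseudo-orbits produce periodic itineraries, which by full branches are realized by periodic points of the same period, giving periodic shadowing of $f_0$.

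The additional point that puts $f_0$ into the \emph{open} set $V_\varepsilon\cap V_\varepsilon^{\mathrm{per}}$ (rather than merely into its closure) is the quantitative robustness of the above construction: if $g\in C_\lambda(I)$ is sufficiently $C^0$-close to $f_0$, then the full-branch and approximate-expansion structure persists, so shadowing of $f_0$ with parameters $(\delta,\varepsilon)$ upgrades to shadowing of $g$ with parameters $(\delta/2,2\varepsilon)$, and similarly in the periodic case. This is a standard topological-dynamics consequence of expansion together with the full-branch property, and it does not require any smoothness. Combined with the density of the $f_0$'s this shows $V_\varepsilon\cap V_\varepsilon^{\mathrm{per}}$ is open and dense for each $\varepsilon>0$, so Baire's theorem produces the desired residual set.

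The main obstacle I expect is the construction of the approximant $f_0$: I need a piecewise linear map in $C_\lambda(I)$ that is simultaneously $C^0$-close to a prescribed $f$, leo, and uniformly expanding. The measure-preservation constraint couples each branch's slope to the widths of the subdivision pieces, so one cannot naively rescale to obtain uniformly large slopes; instead the approximation must be assembled by replacing each near-affine piece of $f$ by a finely subdivided ``zig-zag'' with many short branches of controlled slopes whose averaged graph stays $C^0$-close to the original, while the branch counts and widths conspire to keep $\lambda$ invariant. Once this construction is carried out, the remaining ingredients — openness of $V_\varepsilon$ and $V_\varepsilon^{\mathrm{per}}$, shadowing for uniformly expanding piecewise linear Markov maps, and the passage to periodic shadowing via full branches — are standard one-dimensional arguments.
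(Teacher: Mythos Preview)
Your Baire-category scaffolding is the same as the paper's, and the paper also begins by approximating by a piecewise affine map in $C_\lambda(I)$ with all slopes of absolute value at least $4$. The gap is in the next step: the claim that ``after passing to a suitable iterate if necessary, $f_0$ is Markov with respect to its monotonicity partition with the full-branch property'' is not justified and is in fact false for a typical piecewise linear leo map in $C_\lambda(I)$. A Markov structure requires the forward orbits of the critical values to be finite, a codimension condition that does not follow from leo plus uniform expansion; and genuinely full-branch maps (each monotone piece mapping onto $[0,1]$) are far from dense in $C_\lambda(I)$ in the uniform metric, since their graphs oscillate between $0$ and $1$ on every branch. So you cannot in general produce the $f_0$ you need, and the symbolic-coding argument that follows has no legs to stand on. A second, related problem is the robustness step: ``approximate-expansion structure'' does \emph{not} persist under $C^0$ perturbation in $C_\lambda(I)$ --- nearby $g$ may have infinitely many critical points and no expansion whatsoever --- so shadowing for $f_0$ does not automatically upgrade to shadowing for all nearby $g$.

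What the paper does instead is to replace your global full-branch/Markov requirement by a \emph{local} one obtained through window perturbations. Starting from the piecewise affine approximation and a fine partition $0=a_0<\dots<a_{n+1}=1$, one replaces $f|[a_j,a_{j+1}]$ by a regular $m$-fold window perturbation (odd $m$, large), producing $F\in C_\lambda(I)$ with the key property $F([a_j,a_j+\delta])=F([a_{j+1}-\delta,a_{j+1}])=f([a_j,a_{j+1}])$. Thus tiny intervals at the ends of each partition cell already cover the full image of that cell. This is exactly the covering relation that survives $C^0$ perturbation: for $g$ with $\rho(F,g)<\delta$ one still has $g([a_j,a_j+\delta])$ and $g([a_{j+1}-\delta,a_{j+1}])$ covering the relevant neighbouring cells, and an inductive choice of left/right end-intervals $J_i$ with $J_{i+1}\subset g(J_i)$ traces any $\delta$-pseudo orbit. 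Because there are only finitely many possible $J_i$, a periodic pseudo orbit forces periodicity of the sequence $(J_i)$, yielding periodic shadowing. The point is that this argument uses only interval coverings --- which are $C^0$-stable --- and never invokes expansion of $g$ or a Markov partition. If you want to repair your approach, you should replace the Markov/iterate step by this window-perturbation construction.
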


Let us briefly describe the structure of the paper. In Preliminaries we give general definitions that we will need in the rest of the paper. In particular, our main tool throughout the most of the paper will be controlled use of approximation techniques which we introduce in the end of Section~\ref{sec:Preliminaries}.  In Section~\ref{sec:PP} we turn our attention to the study of periodic points and prove Theorem~\ref{t8}.
The proof relies on a precise control of perturbations introduced in Section~\ref{sec:Preliminaries} which turns out to be particularly delicate.
 With some additional work we consequently obtain Theorem~\ref{t-PP}. We conclude the section with the study of periodic points for maps from $C_{\lambda}(\mathbb{S}^1)$ in Subsection~\ref{subsec:PPcirc}. 
In Section~\ref{sec:shadowing} we provide a proof of Theorem~\ref{t-pshadow}. Similarly as in \cite{KoMaOpKu} we use covering relations, however the main obstacle is the preservation of Lebesgue measure which makes obtaining such coverings a more challenging task. We conclude the paper with Subsection~\ref{subsec:s-limit} where we address a notion stronger than shadowing called the s-limit shadowing (see Definition~\ref{def:shadowing}) in the contexts of $C_{\lambda}(I)$. We prove that s-limit shadowing is dense in the respective environments. The approach resembles the one taken in \cite{MazOpr}, however due to our more restrictive setting our proof requires better control of perturbations. This result, in particular, implies that limit shadowing is dense in the respective environments as well.

\section{Preliminaries}\label{sec:Preliminaries}
Let $\mathbb{N}:=\{1,2,3,\ldots\}$ and $\mathbb{N}_0:=\mathbb{N}\cup\{0\}$.
Let $\lambda$ denote the Lebesgue measure on the unit interval $I:=[0,1]$. We denote by $C_{\lambda}(I)\subset C(I)$ the family of all continuous Lebesgue measure preserving functions of $I$ being a proper subset of the family of all continuous interval maps equipped with the \emph{uniform metric} $\rho$:
$$\rho (f,g) := \sup_{x \in I} |f(x) - g(x)|.$$
A \emph{critical point} of $f$ is a point $x\in I$ such that there exists no neighborhood of $x$ on which $f$ is strictly monotone. Denote by $\crit(f)$ the set of all critical points of $f$. A point $x$ is called \emph{periodic of period $N$}, if there exists $N\in\N$ so that $f^N(x)=x$ and we take the least such $N$. Let us denote by $\Xi(f)$ the set of points from $I$ for which no neighborhood has a constant slope under $f$. Obviously, $\mathrm{Crit}(f)\subset \Xi(f)$. Let $\mathrm{PA}(I)\subset C(I)$ denote the set of \emph{piecewise affine} 
functions; i.e., functions that are affine on every interval of monotonicity and have finitely many points in the set $\Xi(f)$.
 Let $\mathrm{PA}_{\lambda}(I)\subset C_{\lambda}(I)$ denote the set of piecewise affine functions that preserve Lebesgue measure and $\mathrm{PA}_{\lambda(\textrm{leo})}(I)\subset\mathrm{PA}_{\lambda}(I)$ such functions that are additionally \emph{locally eventually onto} (i.e., the image under sufficiently large  iterations of  nonempty open sub-intervals  cover $I$). For some $\xi>0$ and $h\in C_{\lambda}(I)$ define 
 $$B(h,\xi):=\{f\in C_{\lambda}(I): \rho(f,h)<\xi\}.$$
 By $d(x,y)$ we denote the Euclidean distance on $I$ between $x,y\in I$.
 For a set $U\in I$ and $\xi>0$ we will also often use, by the abuse of notation, 
 $$B(U,\xi):=\{x\in I: d(u,x)<\xi \text{ for some } u\in U\}.$$

\subsection{Window perturbations in Lebesgue preserving setting} In this subsection we briefly discuss the setting of Lebesgue measure preserving interval maps and introduce the techniques that we will apply in the rest of the paper.
The proof  of the following proposition is standard and we leave it for the reader.

\begin{proposition}\label{prop:1}$(C_{\lambda}(I),\rho)$  is a complete metric space. \end{proposition}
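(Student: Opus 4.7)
\medskip
\noindent\textbf{Proof proposal.} The plan is to verify that $\rho$ is a bona fide metric on $C_\lambda(I)$ and then check completeness by reducing to the well-known completeness of $(C(I),\rho)$, with the only substantive step being to show that a uniform limit of Lebesgue measure preserving continuous maps is again Lebesgue measure preserving.

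First, note that $\rho$ is finite on $C_\lambda(I)\times C_\lambda(I)$ since all $f\in C(I)$ are bounded, and the standard axioms (symmetry, triangle inequality, non-degeneracy) are inherited from the uniform metric on $C(I)$. Hence $(C_\lambda(I),\rho)$ is a metric space and it suffices to argue that it is closed inside the complete space $(C(I),\rho)$.

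For completeness, let $(f_n)_{n\in\N}\subset C_\lambda(I)$ be Cauchy in $\rho$. Since $(C(I),\rho)$ is complete, there exists $f\in C(I)$ with $f_n\to f$ uniformly on $I$. The key step is showing $f\in C_\lambda(I)$. I would use the following criterion: a continuous self-map $g$ of $I$ preserves $\lambda$ if and only if $\int_I \phi\circ g\,d\lambda=\int_I \phi\,d\lambda$ for every $\phi\in C(I)$. Fix any $\phi\in C(I)$; since $\phi$ is uniformly continuous on $I$, for every $\eps>0$ there is $\delta>0$ so that $|\phi(x)-\phi(y)|<\eps$ whenever $|x-y|<\delta$. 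Choose $N$ so large that $\rho(f_n,f)<\delta$ for all $n\ge N$, which yields $\|\phi\circ f_n-\phi\circ f\|_\infty<\eps$. Consequently
\[
\int_I \phi\circ f\,d\lambda=\lim_{n\to\infty}\int_I \phi\circ f_n\,d\lambda=\int_I \phi\,d\lambda,
\]
the last equality because each $f_n$ preserves $\lambda$. Since $\phi$ was arbitrary, $f$ preserves $\lambda$, so $f\in C_\lambda(I)$.

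The main (minor) obstacle is invoking the characterisation of measure preservation via integrals of continuous test functions; this follows from the Riesz representation theorem applied to the pushforward measure $f_*\lambda$, or alternatively by approximating indicators of intervals by continuous functions and using outer regularity of $\lambda$. Either route is routine, which is why the authors state this proposition without proof.
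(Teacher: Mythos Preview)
Your proof is correct and is precisely the standard argument the paper has in mind; the authors explicitly write ``The proof of the following proposition is standard and we leave it for the reader,'' and your reduction to closedness of $C_\lambda(I)$ in $(C(I),\rho)$ via continuous test functions is the expected route. There is nothing to add.
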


\begin{definition} We say that continuous maps $f,g:[a,b]\subset I\to I$ are \emph{$\lambda$-equivalent} if for each Borel set $A \in \mathcal{B}$,
$$\lambda(f^{-1}(A))=\lambda(g^{-1}(A)).
$$
For $f\in C_{\lambda}(I)$ and $[a,b]\subset I$ we denote by $C(f;[a,b])$ the set of all continuous maps $\lambda$-equivalent to $f| [a,b]$. We define
$$C_*(f;[a,b]):=\{h\in C(f;[a,b]):h(a)=f(a),~h(b)=f(b)\}.$$
\end{definition}

The following definition is illustrated by Figure~\ref{fig:perturbations}.
 
\begin{definition}\label{eq:1} Let $f$ be from $C_{\lambda}(I)$ and $[a,b]\subset I$. For any fixed $m\in\N$, let us define the map $h=h\langle f;[a,b],m\rangle\colon~[a,b]\to I$ by ($j\in\{0,\dots,m-1\}$):
\begin{equation*}
h(a + x) := \begin{cases}
f\left (a+m \Big (x-\frac{j(b-a)}{m}\Big) \right )\text{ if } x\in \left [\frac{j(b-a)}{m},\frac{(j+1)(b-a)}{m} \right ],~j\text{ even}, \\
f\left (a+m \Big (\frac{(j+1)(b-a)}{m}-x \Big ) \right )\text{ if } x\in \left [\frac{j(b-a)}{m},\frac{(j+1)(b-a)}{m} \right ],~j\text{ odd}.
\end{cases}
\end{equation*}
Then $h\langle f;[a,b],m\rangle\in C(f;[a,b])$ for each $m$ and $h\langle f;[a,b],m\rangle\in C_*(f;[a,b])$ for each $m$ odd.
\end{definition}
 
\begin{figure}[!ht]
	\centering
	\begin{tikzpicture}[scale=4]
	\draw (0,0)--(0,1)--(1,1)--(1,0)--(0,0);
	\draw[thick] (0,1)--(1/2,0)--(1,1);
	\node at (1/2,1/2) {$f$};
	\node at (7/16,-0.1) {$a$};
	\node at (5/8,-0.1) {$b$};
	\draw[dashed] (7/16,0)--(7/16,1/4)--(5/8,1/4)--(5/8,0);
	\node[circle,fill, inner sep=1] at (7/16,1/8){};
	\node[circle,fill, inner sep=1] at (5/8,1/4){};
	\end{tikzpicture}
	\hspace{1cm}
	\begin{tikzpicture}[scale=4]
	\draw (0,0)--(0,1)--(1,1)--(1,0)--(0,0);
	\draw[thick] (0,1)--(7/16,1/8)--(14/32+1/48,0)--(1/2,1/4)--(1/2+1/24,0)--(9/16,1/8)--(9/16+1/48,0)--(5/8,1/4)--(1,1);
	\draw[dashed] (1/2,0)--(1/2,1/4);
	\draw[dashed] (9/16,1/4)--(9/16,0);
	\node at (1/2,1/2) {$h$};
	\node at (7/16,-0.1) {$a$};
	\node at (5/8,-0.1) {$b$};
	\draw[dashed] (7/16,0)--(7/16,1/4)--(5/8,1/4)--(5/8,0);
	\node[circle,fill, inner sep=1] at (1/2,1/4){};
	\node[circle,fill, inner sep=1] at (9/16,1/8){};
	\node[circle,fill, inner sep=1] at (7/16,1/8){};
	\node[circle,fill, inner sep=1] at (5/8,1/4){};
	\end{tikzpicture}
	\caption{For  $f\in C_{\lambda}(I)$ shown on the left, on the right
we show the the regular $3$-fold window perturbation of $f$ by $h=h\langle f;[a,b],3\rangle\in C_*(f;[a,b])$.}\label{fig:perturbations}
\end{figure}
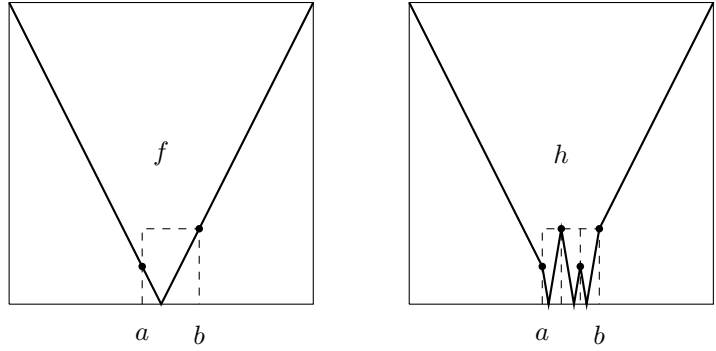

For more details on the perturbations from the previous definition we refer the reader to \cite{BT}.

\begin{definition}\label{def:perturb}
For a fixed $h\in C_*(f;[a,b])$, the map $g=g\langle f,h\rangle\in C_{\lambda}(I)$ defined by
\begin{equation*}
g(x) := \begin{cases}
f(x)\text{ if } x\notin [a,b],\\
h(x)\text{ if } x\in [a,b]
\end{cases}
\end{equation*}
will be called the \emph{window perturbation} of $f$ (by $h$ on $[a,b]$). In particular, if $h=h\langle f;[a,b],m\rangle$, $m$ odd, (resp. $h$ is piecewise
affine), we will speak of
\emph{regular $m$-fold (resp. piecewise affine) window perturbation} $g$ of $f$ (on $[a,b]$).
\end{definition}

\section{Cardinality and dimension of periodic points for generic Lebesgue measure preserving interval and circle maps}\label{sec:PP}
Since generic maps from $C_{\lambda}(I)$ are weakly mixing (Theorem 15 from \cite{BT}) it follows that the Lebesgue measure of the periodic points of generic maps from $C_{\lambda}(I)$ is $0$. The main result of this section is Theorem \ref{t8} which describes the structure, cardinality and dimensions of this set.

Let 
$$\fix(f,k) := \{x: f^k(x) = x\}$$
$$\per(f,k) := \{x : f^k(x) = x \text{ and } f^i(x) \ne x  \text{ for all } 1 \le i < k
\}$$
$$k(x) := k \text{ for } x \in \per(f,k)$$
and
$$\per(f) := \bigcup_{k \ge 1} \per(f,k) = \bigcup_{k \ge 1} \fix(f,k).$$

\begin{definition}\label{transverse}
A periodic point $p \in \per(f,k) $ is called \emph{transverse} if  there exist three adjacent intervals
$A = (a_1,a_2) ,B = [a_2,c_1] ,C = (c_1,c_2)$, with $p \in B$, $B$ possibly reduced to a point,
such that (1)  $f^k(x) = x$ for all $x \in B$ and either (2.a) $f^k(x) > x$ for all $x \in A$ and $f^k(x) < x$ for all $x \in C$ or (2.b)  $f^k(x) <x$ for all $x \in A$ and $f^k(x) > x$ for all $x \in C$.
\end{definition}

To prove Theorem~\ref{t8} we will use the following lemma.

\begin{lemma} For each $k \ge 1$ 
there is a dense set $\{g_i\}_{i\geq 1}$  of maps in $C_{\lambda}(I)$ such that
$g_i \in \mathrm{PA}_{\lambda}(I)$, $\per(g_i,k) \not = \emptyset$, and for each $i$ all points in $\fix(g_i,k)$ are transverse.
\end{lemma}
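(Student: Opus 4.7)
The plan is to approximate any $f\in C_\lambda(I)$ uniformly by a map in $\mathrm{PA}_\lambda(I)$ with the required properties, proceeding in two stages: first obtain a convenient leo piecewise affine approximant $g_0$, and then refine it so that every fixed point of its $k$-th iterate is transverse.

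For the first stage I would invoke density of $\mathrm{PA}_{\lambda(\mathrm{leo})}(I)$ in $C_\lambda(I)$, which follows from the piecewise affine approximation scheme developed in Section~\ref{sec:Preliminaries}. Pick $g_0\in\mathrm{PA}_{\lambda(\mathrm{leo})}(I)$ close to $f$ in the uniform metric $\rho$. Since $g_0$ is leo it has positive topological entropy, and hence periodic orbits of every sufficiently large period; for the finitely many remaining small values of $k$ one can produce an additional transverse orbit of exact period $k$ by composing a regular window perturbation (Definition~\ref{def:perturb}) on a tiny interval with a mild tilt of its central piece, keeping the $\rho$-change as small as desired. We may therefore assume that $\per(g_0,k)\neq\emptyset$ and that at least one point of $\fix(g_0,k)$ is already transverse.

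For the second stage, observe that $g_0^k$ is piecewise affine with only finitely many linear pieces. On the interior of any such piece an intersection of the graph with the diagonal is automatically transverse in the sense of Definition~\ref{transverse}: slope $\neq 1$ yields an isolated crossing, while slope $1$ together with a matching intercept produces a maximal subinterval on which $g_0^k=\mathrm{id}$, on either side of which $g_0^k(x)-x$ is forced to change sign by maximality. The only possible failures of transversality therefore sit at the finitely many vertices of $g_0^k$. To remove them I would embed $g_0$ in the finite-dimensional family $\mathcal F\subset\mathrm{PA}_\lambda(I)$ of all maps that share a common, sufficiently refined partition of monotonicity with $g_0$: inside $\mathcal F$, Lebesgue preservation is encoded by a finite system of affine equalities on the slopes and partition lengths, vertex heights of $g^k$ depend polynomially on the parameters, and the non-transversality locus is a finite union of proper real-algebraic hypersurfaces. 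An arbitrarily small perturbation of $g_0$ within $\mathcal F$ therefore avoids all of them, and since transverse periodic orbits persist under small $C^0$ perturbations, the resulting $g$ still satisfies $\per(g,k)\neq\emptyset$.

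The main obstacle is this final step. A Lebesgue preserving local perturbation of $g_0$ propagates to each of the possibly many preimage branches in every iterate up to $k$, so a vertex-by-vertex attack via isolated window perturbations alone would require tracking a combinatorially involved cascade of compounded effects on the tangencies of $g_0^k$. The key technical device that sidesteps this is the passage to the single finite-dimensional parameter family $\mathcal F$, inside which Lebesgue preservation becomes a system of linear constraints and the tangency conditions for $g^k$ become algebraic in the parameters; the delicate perturbation question then reduces to a clean openness plus density-of-algebraic-nonvanishing transversality argument within $\mathcal F$.
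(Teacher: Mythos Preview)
Your approach diverges substantially from the paper's, and the second stage has a real gap. The paper never passes to an abstract parameter family: it works entirely with explicit regular window perturbations, first moving $0$ and $1$ out of $\fix(g,k)$, and then inductively destroying every \emph{critical connection} of length $\le k$ (an orbit segment $c_1\mapsto g^\ell(c_1)=c_2$ with $c_1,c_2\in\crit(g)$) by a small window perturbation that shifts one critical value. Once no such connections remain, the orbit of any $p\in\fix(g,k)$ avoids $\crit(g)$ (otherwise some $c\in\crit(g)$ would satisfy $g^k(c)=c$, a critical connection of length $k$), so every fixed point of $g^k$ lies in the interior of an affine piece of $g^k$; slope $1$ on that piece is then impossible because the piece's endpoints would be break points lying in $\fix(g,k)$. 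Transversality follows. The period-$k$ orbit is produced by a $3$-fold window perturbation around an existing fixed point, which creates a small full tent and automatically stays in $\mathrm{PA}_\lambda(I)$; your ``mild tilt of its central piece'' does not obviously preserve Lebesgue measure.

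The unjustified step in your argument is the assertion that the non-transversality locus is a union of \emph{proper} subvarieties of the Lebesgue-preserving stratum of $\mathcal F$. For the genericity argument to go through you must show that each tangency polynomial does not vanish identically on that stratum, which amounts to exhibiting, for every potential tangency, a nearby map in $\mathrm{PA}_\lambda(I)$ for which it is absent. That is precisely the content of the lemma, so the appeal to ``density of algebraic non-vanishing'' is circular without further input. There is also a parametrization issue: with the break points of $g_0$ held fixed and the heights as parameters, the constraint $\sum 1/|s_i|=1$ on each horizontal strip is rational rather than affine in the heights, the combinatorial type (which pieces cover which strips) changes across the parameter space, and the positions and images of the vertices of $g^k$ are only piecewise rational in the parameters. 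All of this can likely be made rigorous, but the paper's explicit window perturbations are exactly the witnesses one would need to certify properness, so a careful version of your route ultimately leans on the same constructions.
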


\begin{proof}
The set $\mathrm{PA}_{\lambda}(I)$ is dense in $C_{\lambda}(I)$ (\cite{B}, see also  Proposition 8 in \cite{BT}). 
Each $f \in \mathrm{PA}_{\lambda}(I)$ (in fact each $f \in C_{\lambda}(I)$) has a fixed point,  so using a 3-fold window perturbation around the fixed point we can approximate $f$ arbitrarily
well by a map $f_1 \in\mathrm{PA}_{\lambda}(I)$ with $\per(f_1,k) \not = \emptyset$.

Fix $f \in\mathrm{PA}_{\lambda}(I)$ with $\per(f,k) \not = \emptyset$.
We claim that by an arbitrarily small perturbation of $f$
 we can construct  a map $g \in\mathrm{PA}_{\lambda}(I)$ such that
 \begin{equation}
 \per(g,k) \not = \emptyset \text{ and
 all points in } \fix(g,k) \text{  are transverse.}\label{e1}
 \end{equation}
 
 We do this in several steps. The first step is to perturb $f$ to $g$ in such a way that the points $0$ and $1$ are not in $\fix(g,k)$;
 we will treat only the point $0$, the arguments for the point $1$ are analogous.
If $0$ is a fixed point we can make an arbitrarily small window perturbation as in Figure
\ref{fig:Per1} so that this is no longer the case.

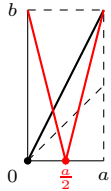
\begin{figure}[!ht]
	\begin{tikzpicture}[scale=2]
	\draw (0,1)--(0,0)--(1/2,0);
	\draw[dashed] (0,1)--(1/2,1)--(1/2,0);
	\draw[dashed] (0,0)--(1/2,1/2);
	\draw[thick] (0,0)--(1/2,1);
	\draw[thick,red] (0,1)--(0.25,0)--(1/2,1);
	\node at (-0.1,-0.1) {$\scriptstyle 0$};
	\node at (1/2,-0.1) {$\scriptstyle a$};
	\node at (0.25,-0.12) {{\color{red}$\scriptstyle \frac{a}{2}$}};
	\node[circle,fill,red, inner sep=1] at (0.25,0){};
	\node[circle,fill, inner sep=1] at (0,0){};
	\node at (-0.1,1) {$\scriptstyle b$};
	\end{tikzpicture}
	\caption{Small perturbations of a map $f\in \mathrm{PA}_{\lambda}(I)$ near 0. }\label{fig:Per1}
\end{figure}

Now consider the case when  $f^{j}(0) =0$, where $j > 1$ is
the period of the point $0$, and $j |k$. We assume that $a$ is so small that $f^i([0,a]) \cap [0,a] = \emptyset$ for $i=1, 2, \dots,  j-1$
and choose $a$ so that $f^j(a) \ne 0$. Let $g$ be the map resulting from a regular 2-fold window perturbation of $f$ on the interval $[0,a]$
(see Figure \ref{fig:Per2}). Thus $g(0) = f(a)$ and  $g^{j -1} = f^{j -1} $ on the interval $[f(0), f(a)]$, and so 
$g^j(0) = g^{j-1} \circ f(a) =  f^j(a) \ne 0$.

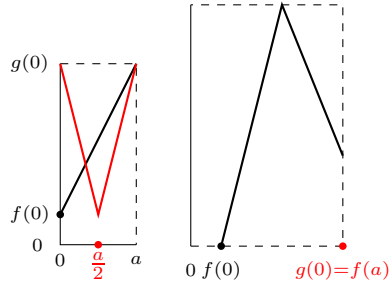
\begin{figure}[!ht]
	\begin{tikzpicture}[scale=2]
	\draw (0,1)--(0,-0.2)--(1/2,-0.2);
	\draw[dashed] (0,1)--(1/2,1)--(1/2,-0.2);
	\draw[thick] (0,0)--(1/2,1);
	\draw[thick,red] (0,1)--(0.25,0)--(1/2,1);
	\node at (-0.2,0) {$\scriptstyle f(0)$};
	\node at (-0.2,1) {$\scriptstyle g(0)$};
	\node at (0,-0.3) {$\scriptstyle 0$};
	\node at (-0.15,-0.2) {$\scriptstyle 0$};
	\node at (1/2,-0.3) {$\scriptstyle a$};
	\node at (0.25,-0.34) {{\color{red}$\frac{a}{2}$}};
	\node[circle,fill,red, inner sep=1] at (0.25,-0.2){};
	\node[circle,fill, inner sep=1] at (0,0){};
	\end{tikzpicture}
	\hspace{0.1cm}
	\begin{tikzpicture}[scale=4]
	\draw[dashed] (.1, 1/5) --(3/5,1/5) -- (3/5,1) -- (.1,1);
	\draw (.1, 1/5)  -- (.1,1) ;
	\draw [thick] (1/5,1/5)--(2/5,1) -- (3/5,1/2);

	\node at (.2, 0.12) {$\scriptstyle f(0)$};
	\node[circle,fill, inner sep=1] at (.2,0.2){};
	\node[red] at (.6, 0.12) {$\scriptstyle g(0)=f(a)$};
	\node[circle,fill, red, inner sep=1] at (0.6,0.2){};
	\node at (0.1,0.12) {$\scriptstyle 0$};
	\end{tikzpicture}

	\caption{Left: small perturbations of a map $f\in \mathrm{PA}_{\lambda}(I)$ near 0; Right:  $g^{j -1} =f^{j-1}$ on $[f(0),f(a)]$.}\label{fig:Per2}
\end{figure}

Thus we can choose a dense set of 
 $f \in\mathrm{PA}_{\lambda}(I)$ with $\per(f,k) \not = \emptyset$,
and $\fix(f,k) \cap \{0,1\} = \emptyset$. Fix such a map $f$.
We claim that by an arbitrarily small perturbation of $f$
 we can construct a $g \in\mathrm{PA}_{\lambda}(I)$ with $\per(g,k) \not = \emptyset$ and $\fix(g,k) \cap \{0,1\} = \emptyset$  such that for every 
  $c \in \crit(g)$ we have $g^i(c) \not \in \crit(g)$ for all $1\leq i \le k$.

Suppose 
that for some $c_1,c_2 \in \crit(f)$ we have an $\ell\geq 1$ such that $f^{\ell}(c_1) = c_2$
and $f^i(c_1) \not \in \crit(f)$ for $1 \le i \le \ell -1$. 
We call this orbit a \textit{critical connection of length $\ell$}. 
Choose $c_1,c_2$ with the minimal such $\ell$, if there are several choices fix one of them.
We will perturb $f$ to a map $g$ for which this critical connection is destroyed, so $g$ has
one less critical connection of length $\ell$.
Since there are finitely many critical connections of a given length, a finite number of such
perturbations will remove all of them, and a countable sequence of perturbations will  finish the proof
of the claim.

If $c_1 \ne c_2$ it suffices to use a small window perturbation around $c_2$ as in Figure
\ref{fig:Per};
if the window perturbation is disjoint from the orbit segment $f^i(c_1)$ for $i \in \{0,1,\dots, \ell -1\}$
then for the resulting  map $g$ we  have $g^{\ell}(c_1) = c_2 < \bar{c}_2$ and $g^i(c_1) \not \in\crit(g) = \{\bar{c}_2\} \cup \crit(f) \setminus \{c_2\}$  for $i=1,2,\dots,\ell - 1$, 
thus we have destroyed the critical connection.
 Let $Q=\{f^i(c): 0 \le i< \ell \text{ and } c\in \crit(f)\} \setminus \{c_2\}$. 
Notice that $g(\bar{c}_2) =f(c_2)$, thus taking the neighborhood for the perturbations sufficiently small to be disjoint from $Q$
guaranties that we did not create a new critical connections of length $\ell$ or shorter.

\begin{figure}[!ht]
	\begin{tikzpicture}[scale=4]
	\draw[dashed] (1/10,1/5)--(1/10,3/5)--(3/10,3/5)--(3/10,1/5)--(1/10,1/5);
	\draw [thick] (1/10,3/5)--(2/10,1/5)--(3/10,3/5);
	\draw [thick,red] (1/10,3/5)--(5/20,1/5)--(3/10,3/5);
	\node[circle,fill, inner sep=1] at (0.2,0.2){};
	\node[circle,fill,red, inner sep=1] at (0.25,0.2){};
	\node at (0.2,0.12) {$c_2$};
	\node at (0.28,0.12) {{\color{red}$\bar{c}_2$}};
	\end{tikzpicture}
	\caption{Small perturbations of a map $f\in \mathrm{PA}_{\lambda}(I)$.}\label{fig:Per}
\end{figure}
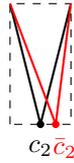

 Now consider the case $c_1 = c_2$.
Suppose  that $c_1$ is a local minimum of $f$; the other cases are
similar. If $\ell = 1$ then we again move the peak using the window perturbation as in Figure \ref{fig:Per} to destroy
the connection.  If $\ell > 1$ then by assumption the map $f^{\ell-1}$ in a neighborhood $U = (a,b)$ of the point $f(c_1)$ is strictly monotone.
Using a window perturbation around $c_1$ as in Figure~\ref{fig:Pert},
yields a map $g \in C_{\lambda}(I)$ with $\per(g,k) \not = \emptyset$
such that
$$\crit(g) = \{\bar{c}_1\} \cup \crit(f) \setminus \{c_1\}.$$  
\begin{figure}[!ht]
	\begin{tikzpicture}[scale=4]
	\draw[dashed] (1/10,1/5)--(1/10,3/5)--(3/10,3/5)--(3/10,1/5)--(1/10,1/5);
	\draw [thick] (1/10,3/5)--(2/10,1/5)--(3/10,3/5);
	\draw [thick,red] (1/10,3/5)--(5/20,1/5)--(3/10,3/5);
	\node[circle,fill, inner sep=1] at (0.2,0.2){};
	\node[circle,fill,red, inner sep=1] at (0.25,0.2){};
	\node at (0.2,0.12) {$\scriptstyle c_1$};
	\node at (0.28,0.12) {{\color{red}$\scriptstyle \bar{c}_1$}};
	\node at (0.28,-0.05) {\phantom{1}};
	\end{tikzpicture}
	\hspace{0.1cm}
	\begin{tikzpicture}[scale=4]
	\draw[dashed] (1/5, 1/5) -- (3/5,1/5) -- (3/5,1) -- (1/5,1) -- (1/5,1/5);
	\draw [thick] (1/5,1/5)--(3/5,1);
	\node at (1/5, 0.12) {$\scriptstyle a$};
	\node at (.4, 0.12) {$\scriptstyle f(c_1)$};
	\node at (.4, 0.05) {$\scriptstyle =g(\bar{c}_1)$};
	\node[circle,fill, inner sep=1] at (0.4,0.2){};
	\node at (3/5, 0.12) {$\scriptstyle b$};
	\end{tikzpicture}
	\caption{Left: small perturbations of a map $f\in \mathrm{PA}_{\lambda}(I)$;
	Right: $g^{\ell-1} =f^{\ell-1}$.}\label{fig:Pert}
\end{figure}
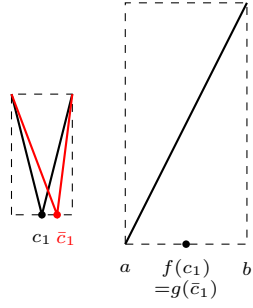
The critical set $\crit(f)$ is finite since $f$ is piecewise affine.
If this perturbation is small enough to be disjoint from the set $\mathcal{U} := \cup_{i=1}^{\ell -1} f^i(U)$
then the
resulting map $g| \mathcal{U} = f| \mathcal{U}$, and so $g^{\ell -1}| U = f^{\ell-1}| U$.  Furthermore, if the perturbation is small enough
so that $g(\bar{c}_1) \in U$ then
 $$g^{\ell-1}\circ g (\bar{c}_1) = f^{\ell - 1} \circ  g (\bar{c}_1) 
= f^{\ell -1} \circ  f(c_1) = c_1.$$
 Moreover, we can choose the perturbation so small that these two points are arbitrarily close,
i.e.,
 $g^{\ell} (\bar{c}_1) \in (  c_1 - \varepsilon, c_1)$, for any fixed $\varepsilon > 0$.

If $\varepsilon$ is small enough then there are no critical points of $f$ in the interval
 $(c_1 - \varepsilon,c_1)$. Furthermore, since $\bar{c}_1 > c_1$, then if the perturbation
 and $\varepsilon$ are small enough it holds that $\bar{c}_1$ is also not in this interval.
This procedure possibly creates new critical connections but of length at least $\ell+1$; but inductive application of both cases will eventually get rid of the critical connections of length at most $k$.

 If we choose the interval of perturbation small enough then no new critical connections of length at most $k$ can be created, thus the proof of the claim is finished. 
\end{proof}

Now we have prepared all the tools to give the proof of Theorem~\ref{t8}.
In what follows, by $\underline{\mathrm{dim}}_{Box}$, $\overline{\mathrm{dim}}_{Box}$ and $\mathrm{dim}_H$ we denote the lower box dimension, the upper box dimension and the Hausdorff dimension of the underlying sets respectively.

\begin{proof}[Proof of Theorem~\ref{t8}]
First note that \ref{pp5}) follows from  \ref{pp3}) since 
 $$\mathrm{dim}_H(\per(f)) \le \sup_{k \ge 1} \mathrm{dim}_H(\fix(f,k)) \le \sup_{k \ge 1} \underline{\mathrm{dim}}_{Box}(\fix(f,k)) = 0.$$

For the proofs of \ref{pp1}), \ref{pp2}), \ref{pp3}) and \ref{pp4}) we fix $k \in \N$.

 Thus we can choose a  countable set  \{$g_i\}_{i\geq 1} \subset \mathrm{PA}_{\lambda}(I)$, such that no $g_i$ has slope $\pm 1 $ on any interval, which is dense in $C_{\lambda}(I)$  with
each $g_i$ satisfying \eqref{e1}.  The advantage of such $g_i$ is that for each point in $\fix(g_i,k)$, there is at least one  corresponding periodic
point in $\fix(g,k)$ if  the perturbed map $g$  if is sufficiently close to $g_i$.

Consider the shortest length
$$\gamma_i:=\min\{|c-c'|: c,c'\in \crit(g_i) \cup \{0,1\} \text{ and } c\neq c'\}$$
of the intervals of monotonicity of $g_i$, note that $\gamma_i>0$ since $g_i\in \mathrm{PA}_{\lambda}(I)$. 

Since $g_i \in  \mathrm{PA}_{\lambda}(I)$ do not have slope $\pm 1$ the set $\fix(g_i,k)$ is finite, suppose it consists of $\ell_i$ disjoint orbits and  the set $\per(g_i,k)$ consists of $\bar \ell_i \le \ell_i$ distinct orbits.
In particular 
\begin{equation}\label{e-count}
\ell_i \le \# \fix(g_i,k) \le k \ell_i \text{  and  } \per(g_i,k) = k \bar \ell_i.
\end{equation}
Choosing one point  from each of the orbits  in $\fix(g_i,k)$ defines the set
 $\{x_{l,i}: 1 \le l \le \ell_i \} \subset \fix(g_i,k)$.

By the definition of $g_i$, the minimal distance 
$$\eta_i:=\min\{|g_i^{m}(x_{l,i})-c|:  0 \le m \le k(x_{l,i})-1, \ \ 1 \le l \le \ell_i , \ c \in \crit(g_i) \cup \{0,1\} \}$$  of the periodic orbits to the 
set $\crit(g_i) \cup \{0,1\}$ is strictly positive.

 If $k=\ell_i =1$ let  $\beta_i := 1$, otherwise we consider the minimal distance
 $$\beta_i  := \frac{1}{2}  \min\{|x - x'|: x \ne x' \in 
 \fix(g_i,k)\}.$$

Let $\tau_i$ be a positive real number such that the slope of every $|(g_i^k)'(x)| < \tau_i$  for every point $x$
where $g_i^k$ is differentiable.

The construction in the proof depends on integers $n_i \ge 1$ which will be defined in the proof, for most of the estimates it suffices to have
$n_i =1$, but for the upper box dimension estimates we will need $n_i$ growing sufficiently quickly.
We define a new map $h_ i\in  \mathrm{PA}_{\lambda}(I)$ by applying a regular $2n_i + 1$-fold window perturbation of $g_i$ of diameter $ a_i \le \frac{1}{2\tau_i} \min(\frac{1}{i k \ell_i},\eta_i,\gamma_i,\beta_i, (k\ell_i)^{-i})$
around each of the points $x_{l,i}$  keeping the map $g_i$ unchanged  elsewhere, in particular it
is unchanged around the other points in $\fix(g_i,k)$.
The perturbations are disjoint  from one another (perturbation around $x_{l,i}$ and $x_{l',i}$) by the definition of $a_i$. 
 The bound on $a_i$  guarantees that these maps satisfy the following properties:
\begin{enumerate}[i)]
\item The collection $\{h_i\}_{i\geq 1}$ is dense in $C_{\lambda}(I)$ (since the total perturbation size is bounded by  $k \ell_i a_i \to 0$);
\item \label{p2} Suppose $x_{l,i} \in \fix(g_i,k)$. 
\begin{enumerate}[a)]
\item
 The map $h_i^{k(x_{l,i})}$  has exactly  $2n_i + 1$ fixed points
in the interval  $I_{l,i} := [x_{l,i} -a_i, x_{l,i} + a_i]$,
\item \label{p2''} The map $h_i^k$ has
 $(2n_i+1)^{k/k(x_{l,i})}$ fixed points in this interval.
 \item \label{p2'} The full branches of $h_i^{k/k(x_{l,i})}$ have length  $a_i/(2n_i+1)^{k/k(x_{l,i})}$, thus
each subinterval of $I_{l,i}$ of  length $2a_i/(2n_i+1)^{k/k(x_{l,i})}$ contains at least one full branch and at most parts of three branches,  and thus at least one fixed point and at most 3 fixed point of $h_i^k$.
 \end{enumerate}
 \item \label{p3} The total number $N_{l,i}$ of fixed points of $h_i^k$ arising from the orbit of $x_{l,i}$  satisfies
$$ N_{l,i}   =  (2n_i + 1)^{k/k(x_{l,i})} k(x_{l,i}).$$
Summing over the points $x_{l,i}$  and using $1 \le k(x_{l,i}) \le k$ yields
$$\max( (2n_i + 1) \ell_i ,  (2n_i + 1)^k)
 \le \#  \fix(h_i,k) = \sum_{l=1}^{\ell_i} N_{l,i}   \le (2n_i + 1)^{k} k \ell_i.$$
\item If $x_{l,i} \in \per(g_i,k)$ (i.e., $k(x_{l,i}) = k$) then the $N_{l,i} = (2n_i + 1)k$ points are
not only in $\fix(h_i,k)$ but also in  $\per(h_i,k)$;
thus  $\# \per(h_i,k) \ge (2n_i + 1)k \bar \ell_i$;
\item\label{p5} Any interval of length $a_i/(2n_i+1)^k$ covers  at most two points of $\fix(h_i,k)$
(since $h_i$ restricted to an interval of length  $a_i/(2n_i+1)$ has at most one critical
point).
\end{enumerate}

Consider $\delta_i > 0$ and
 $$G:= \bigcap_{j \in\N} \bigcup_{i \ge j} B(h_i ,\delta_i).$$
 The set $G$ is a dense $G_\delta$ set.
 
 (\ref{pp1}) We claim that if $\delta_i > 0$ goes to zero sufficiently quickly then $\fix(f,k)$ is a Cantor set for each $f \in G$.
 The set $\fix(h_i,k)$ is finite, choose $\zeta_i$ so small that the balls of radius $\zeta_i$ around distinct points of $\fix(h_i,k)$ are 
disjoint and such that $\zeta_i \to 0$.
We can choose $\delta_i$ so small that if $f \in B(h_i,\delta_i)$ then $\fix(f,k) \subset  B(\fix(h_i,k),\zeta_i)$;  in particular the set
$\fix(f,k)$ can not contain an interval whose length is longer than $2\zeta_i$.
 Fix $f \in G$,  thus $f \in B(h_{i_j},\delta_{i_j})$  for some subsequence $i_j$.
Since $\zeta_{i _j}\to 0$ $\fix(f,k)$ can not contain an interval.
 
By  its definition the set $\fix(f,k)$ is closed. 
Consider the open cover of $\fix(h_i,k)$ by pairwise disjoint intervals of length 
$a_{i_j}$,  by \eqref{p2} these intervals contain $(2n_i+1)^{k/k(x_{l,i})}$ fixed points for some $k(x_{l,i})$.
 Fix these covering intervals and  choose $\delta_i$ 
sufficiently small so that all fixed points of $f^k$ of any $f \in B(h_{i},\delta_{i})$ are contained 
in  the covering intervals and so that there are at least  $ \fix(h_i,k) \ge \#(2n_i + 1) \ell_i$ such points (\ref{p3}). 
Thus
there are no isolated points in $\fix(f,k)$ since for any periodic point of period $k$ we can find another  point from $\fix(f,k)$ arbitrary close.
This completes the proof of \eqref{pp1}.

To prove (\ref{pp2}) we need to make some  adjustments to the above argument.
We again fix $f \in G$,  thus $f \in B(h_{i_j},\delta_{i_j})$  for some subsequence $i_j$.
For the same reasons as before the set $\per(f,k)$ is closed and
can not contain any intervals. 
We consider the same open cover as above and impose the same restrictions on
 $\delta_i$ as above.

We claim that if $x_{l,j} \in \fix(g_i,k)$ then $\# \per(h_i,k) \cap I_{l,i} > 1$.
If $x_{l,i} \in \per(g_i,k)$ has period $k$ then the $(2n_i+1)$ fixed points of $h_i$ in the interval $I_{l,i}$ all have period $k$ and so the claim holds in this case.
Suppose now $x_{l,i} \in \fix(g_i,k)$ has period $k(x_{l,i})$ (a strict divisor of $k$), then the corresponding $(2n_i +1)$ points in $\fix(h_i,k)
\cap I_{l,i}$ have period $k(x_{l,i})$. If we consider the map $h_i^{k(x_{l,i})}$ restricted to $I_{l,i}$ then it is a $(2n_i+1)$-fold tent map, thus it has periodic orbits of all periods.  In particular,  
periodic points of $h_i^{k(x_{l,i})}$ with period $k/k(x_{l,i})$
belongs to the set $\per(h_i,k)$. The number  
 of such periodic points is strictly larger than $1$, and the claim follows.

We additionally require that  $\delta_i$ is
sufficiently small so that  for any $f \in B(h_{i},\delta_{i})$
not only are 
all the  fixed points of $f^k$ contained 
in  the covering intervals but also that $f$ restricted to the covering interval around $x_{l,i_j}$ has
at least 2 periodic points of period $k$.

Thus
there are no isolated points in $\per(f,k)$ since for any periodic point of period $k$ we can find another  point from $\per(f,k)$ arbitrary close.
This completes the proof of \eqref{pp2}.

(\ref{pp3}) Since $\per(f,k) \subset \fix(f,k)$ it suffices to prove the statement for $\fix(f,k)$.
Remember that the number $k \ge 1$ and  the sequence $\ell_i$  are fixed.
We claim that if  $\delta_i > 0$ goes to zero sufficiently quickly   then
the lower  box dimension of the $\fix(f,k)$ is zero for any $f \in G$.

To prove the claim fix $f \in G$, thus $f \in B(h_{i_j},\delta_{i_j})$  for some subsequence $i_j$.
Consider the open cover of $\fix(h_{i_j},k)$ by intervals of length 
$a_{i_j}$ guaranteed by \eqref{p2}. Fix these covering intervals and  choose $\delta_{i_j}$ sufficiently small so that all points of $\fix(f,k)$ of any $f \in B(h_{i_j},\delta_{i_j})$ are contained in  the covering intervals.

Let $N(\eps)$ denote the number of intervals of length $\eps > 0$ needed to cover $\fix(h_{i_j},k)$.  By the choice of $\delta_{i_j}$, these intervals of length $a_{i_j}$
also cover $\fix(f,k)$.
Equation \eqref{e-count} combined with \eqref{p2}  implies  that $\ell_{i_j} \le N(a_{i_j}) \le k \ell_{i_j}$.
Combining this with the fact that $a_{i_j} \le (k \ell_{i_j})^{-i_j}$ yields
$$\frac{\log (N(a_{i_j})) }{\log(1/a_{i_j})}  \le  \frac{ \log(k \ell_{i_j})}{\log(1/a_{i_j})} \le \frac{1}{i_j}$$
and thus the lower box dimension of $\fix(f,k)$
defined as 
$$\liminf_{\eps \to 0} \frac{\log (N(\eps)) }{\log(1/\eps)}$$
is 0.

(\ref{pp4}) We begin by calculating the upper box dimension of $\fix(f,k)$. Here we will need to choose the sequence $n_i$ growing sufficiently quickly.
 Instead of covering $\fix(h_i,k)$ by intervals of length $a_i$  we cover it by intervals of length $b_i := 2 a_i/(2n_i+1)^k$.  
By \eqref{p2'} each such interval covers at most three points of $\fix(h_i,k)$.
Thus we need at least $(\# \fix(h_i,k))/3$ such intervals to cover $\fix(h_i,k)$; so by \eqref{p3} we need
 at least $(2n_i+1)^{k}/3$ such intervals to cover $\fix(h_i,k)$.
Fix such a covering and  choose $\delta_i$ sufficiently small so that all periodic points  of period $k$ of any $f \in B(h_{i_j},\delta_{i_j})$ are contained in  the covering intervals.

Thus 
\begin{equation}\label{ubd}
\frac{\log (N(b_{i_j})) }{\log(1/b_{i_j})} \ge  \frac{ \log((2n_i+1)^k/3)}{\log(1/b_{i_j})} 
=   \frac{ \log((2n_i+1)^k) - \log(3)}{\log((2n_i+1)^k) - \log(2a_i)}.
\end{equation}

The sequence $a_i$ has been fixed above, 
if $n_i$ grows sufficiently quickly the last term in \eqref{ubd} approaches one. 
We can not cover  $\fix(h_i,k) $ by fewer intervals, and thus we can not cover $\fix(f,k)$ by fewer interval, thus
 it follows that the upper box dimension of $\fix(f,k)$
defined as 
$$\limsup_{\eps \to 0} \frac{\log (N(\eps)) }{\log(1/\eps)}$$
is 1.

{(5)} We modify the above proof to calculate the upper box dimension of  $\per(f,k)$ for $k \ge 2$.
Instead of $\fix(h_i,k)$  we consider $\per(h_i,k)$. As before, an interval of length $b_i$ covers at most three points
of this set, thus we need at least $\#(\per(h_i,k))/3$ such intervals to cover it.  Let $x_{l,i}$ be a fixed point and $I_{l,i}$ the
associated interval, then
\begin{eqnarray*}
\#(\per(h_i,k)) &\ge& \#(\fix(h_i,k) \cap I_{l,i}) - \sum_{\ell|k, 1 \le \ell < k}  \#(\fix(h_i,\ell) \cap I_{l,i})\\
& = & (2n_i +1)^k  - \sum_{\ell|k, 1 \le \ell < k}   (2n_i +1)^\ell\\
& \ge & (2n_i +1)^k  - \sum_{\ell=1}^{\lfloor k/2 \rfloor}   (2n_i +1)^\ell\\
& = & (2n_i+1)^k - \frac{(2n_i+1)^{1+ \lfloor k/2 \rfloor} - 1}{2n_i}\\
& = & (2n_i+1)^k \cdot \left [
1 - \frac{1}{2n_i}  \left (  \frac{1}{  (2n_i+1)^{k - 1 - \lfloor k/2 \rfloor }} + \frac{ 1}{(2n_i+1)^k} \right )
 \right ].
 \end{eqnarray*} 
If we additionally suppose that  $n_i \ge 2$ then for any $k \ge 2$ we have
\begin{eqnarray*}
1 - \frac{1}{2n_i}  \left (  \frac{1}{  (2n_i+1)^{k - 1 - \lfloor k/2 \rfloor }} + \frac{ 1}{(2n_i+1)^k} \right )
& \ge  &  1 - \frac{1}{4}  \left (  \frac{1}{  (2n_i+1)^{k - 1 - \lfloor k/2 \rfloor }} + \frac{ 1}{(2n_i+1)^k} \right ) \ge \frac{3}{4}.
 \end{eqnarray*} 
Thus the estimate \eqref{ubd} becomes
\begin{eqnarray*}
\frac{\log (N(b_{i_j})) }{\log(1/b_{i_j})} &\ge&  \frac{ \log(\frac{3}{4} (2n_i+1)^k /3) }{\log(1/b_{i_j})} \\
&=&   \frac{ \log( (2n_i+1)^k) - \log(4) }{\log((2n_i+1)^k) - \log(2a_i)}
\end{eqnarray*}
and the rest of the proof follows in a similar manner.
\end{proof}

\begin{remark}
The proof of Theorem~\ref{t8} can easily be adapted to show that the generic map in $C(I)$ has the same properties, this does not seem to be known in our setting.
Related results have been proven for homeomorphisms on manifolds of dimension at least two in \cite{PAG} (unpublished sketch) and \cite{PV}.
\end{remark}

While positive Lebesgue measure of periodic points can not be realized for ergodic maps, it turns out it can be visible in many leo Lebesgue measure preserving maps. To this end let us first introduce some needed definitions.

Let $M_f(I)$ be the space of invariant
Borel probability measures on $I$ equipped with the {\it Prohorov metric} $D$ defined by
$$
D(\mu, \nu)=\inf\left\{\eps\colon
\begin{array}{l l}
& \mu(A) \le \nu(B(A,\eps))+\eps \text{ and }
 \nu(A) \le \mu(B(A,\eps))+\eps \\
& \text{ for any Borel subset } A \subset I
\end{array}
\right\}
$$
for $\mu, \nu \in M_f(I)$. The following (asymmetric) formula
$$D(\mu, \nu)=\inf\{\eps\colon
\mu(A)\leq \nu(B(A,\eps))+\eps \text{ for all Borel subsets } A\subset I\}$$
is equivalent to original definition, which means we need to check only one of the inequalities.
It is also well known, that the topology induced by $D$ coincides with the {\it weak$^*$-topology} for measures, in particular $(M_f(I), D)$ is a compact metric space (for more details on Prohorov metric and weak*-topology the reader is referred to \cite{Huber}).

\begin{lem}\label{lem:nonatomic}
Assume that $\per(f,k)$ is a Cantor set. Fix  $x\in \per(f,k)$ and $\eps>0$. Let $\mu_x$ be the unique $f$-invariant Borel probability measure supported on the orbit of $x \in \per(f,k)$. Then
there is a non-atomic measure $\nu$ supported on $\per(f,k)$ such that $D(\mu_x, \nu)<\eps$.
\end{lem}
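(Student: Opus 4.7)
The plan is to construct $\nu$ as an equidistribution $\nu = \tfrac{1}{k}\sum_{i=0}^{k-1}\nu_i$, where each $\nu_i$ is a non-atomic Borel probability measure localised in a tiny piece of $\per(f,k)$ sitting near the orbit point $f^i(x)$. The only structure I will need from $\per(f,k)$ is that it is a Cantor set (compact, perfect, totally disconnected) containing the entire orbit of $x$.

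First I would fix $\eps' \in (0,\eps)$ small enough that the balls $B(f^i(x),\eps')$ for $i=0,1,\dots,k-1$ are pairwise disjoint; such $\eps'$ exists because the orbit consists of $k$ distinct points. Since $\per(f,k)$ is zero-dimensional it admits a basis of relatively clopen sets, so for each $i$ I would select a non-empty clopen subset $V_i$ of $\per(f,k)$ with $f^i(x)\in V_i \subset B(f^i(x),\eps')$. A non-empty clopen subset of a perfect space is itself perfect: for $y \in V_i$, any open neighborhood of $y$ in $\per(f,k)$ that sits inside $V_i$ must meet $\per(f,k)\setminus\{y\}$ by perfectness, and that point lies in $V_i$. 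Thus each $V_i$ is a Cantor set in its own right and therefore carries some non-atomic Borel probability measure $\nu_i$ with $\supp \nu_i \subset V_i \subset \per(f,k)$ (one may e.g.\ push forward the standard Cantor measure through a homeomorphism).

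Setting $\nu := \tfrac{1}{k}\sum_{i=0}^{k-1}\nu_i$, a finite convex combination of non-atomic measures is non-atomic, and $\supp \nu \subset \bigcup_i V_i \subset \per(f,k)$. For the Prohorov estimate I would use the asymmetric characterisation of $D$ recalled just above the lemma. For any Borel set $A\subset I$, if $f^i(x)\in A$ then every point of $V_i$ lies within Euclidean distance $\eps'$ of $f^i(x)\in A$, so $V_i \subset B(A,\eps')$ and $\nu_i(B(A,\eps'))=1$; otherwise $\nu_i(B(A,\eps'))\ge 0$. Summing,
\[
\nu(B(A,\eps')) \;\ge\; \frac{1}{k}\,\#\{\,i\,:\,f^i(x)\in A\,\} \;=\; \mu_x(A),
\]
so $\mu_x(A) \le \nu(B(A,\eps')) + \eps'$ for every Borel $A$, and hence $D(\mu_x,\nu)\le \eps' < \eps$.

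No step here is genuinely hard; the one place to be careful is that the clopen pieces $V_i$ can be chosen inside arbitrarily small Euclidean balls, which is precisely where zero-dimensionality of the Cantor set $\per(f,k)$ enters. It is worth flagging that the statement asks only for a non-atomic $\nu$, not an $f$-invariant one; had invariance been required, the main obstacle would have been to symmetrise each $\nu_i$ across the orbit while keeping the average non-atomic, which is substantially more delicate than the present argument.
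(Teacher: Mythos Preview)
Your proof is correct and is essentially the same idea as the paper's: place a non-atomic probability measure on a tiny Cantor piece of $\per(f,k)$ near each orbit point and average. The only genuine difference is that the paper picks a single Cantor set $C\subset\per(f,k)$ near $x$ with $\diam f^i(C)<\eps$ and pairwise disjoint images, puts one non-atomic $\hat\nu$ on $C$, and then sets $\nu=\tfrac1k\sum_{i=0}^{k-1}\hat\nu\circ f^i$; you instead pick independent clopen patches $V_i$ and independent measures $\nu_i$. Your route avoids saying anything about how $f$ acts on $\per(f,k)$, which makes it slightly more elementary for the lemma as stated. The paper's route buys $f$-invariance of $\nu$ for free, and this is in fact silently used in the proof of Theorem~\ref{t-PP} (the line ``By definition $\nu$ is an $f$-invariant Borel probability measure''), so if you later rely on the lemma there you would need to either upgrade your construction to the paper's push-forward version or add invariance to the lemma's conclusion.
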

\begin{proof}
There exists a Cantor set $C\subset \per(f,k)$ such that $x\in C$ and sets $f^i(C)$ are pairwise disjoint with $\diam(f^i(C))<\eps$ for $i=0,\ldots,k-1$.
Let $\hat{\nu}$ be any non-atomic probability measure on $C$ and put 
$\nu=\frac{1}{k}\sum_{i=0}^{k-1}\hat{\nu}\circ f^i$. Clearly $\nu$ is $f$-invariant. 
Note that $\nu(B(f^i(x),\eps))\geq \nu(f^i(C))=1/k$, which yields that 
$D(\mu_x, \nu)<\eps$.
\end{proof}

\begin{proof}[Proof of Theorem~\ref{t-PP}]
Using Theorem \ref{t8} and the results of \cite{BT} we can choose
a map $f\in C_{\lambda}(I)$ that is leo, ergodic and $\per(f,k)$ is a Cantor set
for each $k$. By result of Blokh, every mixing interval map has the periodic specification property \cite{Blokh} (see also \cite{BT}, Corollary 10). By a well known result of Sigmund \cite{Sig1,Sig2}, so called $\co$-measures, i.e., ergodic measures supported on periodic orbits, are dense in the space of invariant probability measures for maps with periodic specification property.  In our context it means that Lebesgue measure can be approximated arbitrarily well by a $\co$-measure supported on a periodic orbit.
As a consequence, Lemma~\ref{lem:nonatomic} implies that there exists a sequence $\mu_k$ of non-atomic measures supported on a subset of $\per(f)$
such that $\lim_{k\to \infty} D(\mu_k,\lambda)=0$.

Let us fix any $\eps>0$ and without loss of generality assume that $D(\mu_k,\lambda)<\eps$ for every $k$.

Consider the measure 
$$\nu := \sum_{k=1}^{\infty} \frac{1}{2^k}\cdot \mu_k.$$
By definition $\nu$ is an $f$-invariant Borel probability measure, so $f$ preserves both measures $\lambda$ and $\nu$.
As a combination of non-atomic measures, $\nu$ is non-atomic,
and since $\lim_{k\to \infty} D(\mu_k,\lambda)=0$, $\nu$ has full support, i.e., $\supp~\nu=I$.
 
  We define a map $h\colon~I\to I$ by $h(x):=\nu([0,x])$, since $\nu$ has full support and is non-atomic, the map $h$ is a homeomorphism. 
Note that by the definition of the metric $D$ we have
$$
\nu([0,x])\leq \lambda([0,x+\eps])+\eps=x+2\eps
$$
and
$$
x-\eps=\lambda([0,x-\eps])\leq \nu([0,x])+\eps
$$
hence $|x-h(x)|<2\eps$.
  
For each Borel set $A$ in $I$ we can equivalently write
\begin{equation}\label{e:1}
\lambda(h(A))=\nu(A)\text{ or } \lambda(A)=\nu(h^{-1}(A)).
\end{equation}
We claim that $g :=h\circ f\circ h^{-1}\in C_{\lambda}(I)$. Using (\ref{e:1}) for any Borel set $A$ in $I$ we have
\begin{equation*}
  \lambda(A)=\nu(h^{-1}(A))=\nu(f^{-1}(h^{-1}(A)))=
  \lambda(h(f^{-1}(h^{-1}(A))))=\lambda(g^{-1}(A)).
\end{equation*}
Moreover, the maps $g$ and $f$ are topologically conjugated, so the map $g$ is also leo and $h(\per(f))= \per(g)$. But by (\ref{e:1}) again
$$\lambda(\per(g))=\lambda(h(\per(f)))=\nu(\per(f))= \sum_{k=1}^{\infty} \frac{1}{2^k}\cdot \mu_k(\per(f)) = 1.
$$

In the above construction, we may take $\eps$ arbitrarily small, therefore $g$ can be arbitrarily small perturbation of $f$.

Now assume that we are given a leo mao $f\in C_\lambda(I)$ for which $\lambda(\per(f))=1$.
Consider the measure 
$$\eta := \sum_{k=1}^{\infty} \frac{1}{2^k}\cdot \eta_k,$$
where each $\eta_k$ is obtained by application of Lemma~\ref{lem:nonatomic} to a point $x\in \per(f,k)$.
Then $\eta$ is no-natomic and $\eta(\per(f,k))>0$ for every $k$.

We repeat the above proof (construction of map $g$) using measures $\nu_{\eps_i} := 
 \eps_i \cdot \eta + (1 -\eps_i) \cdot \lambda$
where the sequence $0 < \eps_i < 1$ decreases to $0$.
The resulting maps $\{g_{\eps_i}\}$ satisfy  $\rho(g_{\eps_i}, f) \to 0$, completing the proof.
\end{proof}

\subsection{Periodic points for generic circle maps}\label{subsec:PPcirc}
    
Let $C_{\lambda,d}(\mathbb{S}^1)$ denote the set of degree $d$ maps in $C_{\lambda}(\mathbb{S}^1)$. The proof of Theorem \ref{t8} immediately shows:

\begin{thm}
Theorem \ref{t8}
holds for generic  maps in $C_{\lambda,d}(\mathbb{S}^1)$ for each $d \in \mathbb{Z} \setminus \{1\}$.
\end{thm}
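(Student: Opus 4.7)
The plan is to rerun the proof of Theorem~\ref{t8} verbatim inside $C_{\lambda,d}(\mathbb{S}^1)$ instead of $C_\lambda(I)$. The three ingredients that need verification in this new ambient space are: (i) density of piecewise affine Lebesgue-preserving maps of degree $d$, (ii) the existence of (many) fixed points of each iterate $f^k$ to seed the window perturbations, and (iii) invariance of the degree under window perturbations. Once these are in place, the arguments from Section~\ref{sec:PP} apply locally on arcs and carry over without change.

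For (i), the circle analogue of $\mathrm{PA}_\lambda(I)$ consists of those piecewise affine Lebesgue-preserving maps of degree $d$, and density in $C_{\lambda,d}(\mathbb{S}^1)$ is obtained by the same approximation argument as on the interval (see \cite{B}, \cite{BT}); the degree is locally constant in the uniform topology, so piecewise affine approximants automatically have the correct degree. For (ii), given any lift $F\colon\mathbb{R}\to\mathbb{R}$ with $F(x+1)=F(x)+d$, the function $F-\mathrm{id}$ satisfies $(F(x+1)-(x+1))-(F(x)-x)=d-1\neq 0$, so its range contains an interval of length at least $|d-1|\geq 1$ and hence an integer, yielding a fixed point of $f$. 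Applied to $f^k$, which has degree $d^k\neq 1$ whenever $d\neq 1$, this gives $\fix(f,k)\neq\emptyset$ for every $k\geq 1$, so the analogue of the initial lemma of Section~\ref{sec:PP} producing a dense set of $g_i$ with $\per(g_i,k)\neq\emptyset$ and all fixed points of $g_i^k$ transverse goes through. Claim (iii) is immediate: a window perturbation of Definition~\ref{def:perturb} is supported in an arbitrarily small arc and agrees with $f$ outside, so it cannot alter the degree.

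With these observations in hand, each step of the proof of Theorem~\ref{t8}—the destruction of critical connections by small regular or piecewise affine window perturbations, the regular $(2n_i+1)$-fold window perturbation around each representative $x_{l,i}$ of a $g_i$-periodic orbit, the counting estimates \eqref{p3}, \eqref{p2'}, \eqref{p5}, and the Baire category argument yielding the dense $G_\delta$ set $G$—is purely local and depends only on the structure of $f^k$ on arcs disjoint from the critical set. Therefore items \eqref{pp1}--\eqref{pp5} of Theorem~\ref{t8} transfer word for word to a generic $f\in C_{\lambda,d}(\mathbb{S}^1)$.

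The one genuine obstacle, and the reason for the hypothesis $d\neq 1$, is that a degree one Lebesgue-preserving circle map can be (conjugate to) an irrational rotation, in which case $\per(f)=\emptyset$ and no such generic Cantor-set picture is available; indeed, the generic phenomenon itself changes character for $d=1$, which is why that case is isolated and treated separately in Theorem~\ref{thm:C_p}. For every other integer degree the proof of Theorem~\ref{t8} applies as described.
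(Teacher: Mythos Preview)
Your proposal is correct and follows exactly the route the paper takes: the paper's own proof is simply the one-line assertion ``The proof of Theorem~\ref{t8} immediately shows,'' and your write-up makes explicit precisely the checks (density of $\mathrm{PA}_\lambda$ maps of the given degree, existence of fixed points of $f^k$ via the $d^k\neq 1$ degree argument, and degree-invariance of window perturbations) that justify that one line. There is nothing to correct; your version is a faithful elaboration of the paper's implicit argument.
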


For $C_{\lambda,1}(\mathbb{S}^1)$ the situation is more complicated, consider the open set
$$C_p :=\{f \in C_{\lambda,1}(\mathbb{S}^1): f \text{ has a transverse periodic point of period } p\}.$$
In this setting the proof of  Theorem  \ref{t8} yields the following result, 

\begin{thm}\label{thm:C_p}
For any $f$ in a dense $G_\delta$ subset of $\overline{C}_p$ we have that for each
$k \in \N$
\begin{enumerate}
\item the set $\fix(f,kp)$ is  a Cantor set;
\item there exists a Cantor set  $P_{kp} \subset \per(f,kp)$;
\item the sets $P_{kp} \subset \per(f,kp) \subset \fix(f,kp)$ have Hausdorff dimension and  lower box dimension zero, while the upper box dimension of these sets is  one, and
\item the Hausdorff dimension of $\per(f)$ is zero.
\end{enumerate}
\end{thm}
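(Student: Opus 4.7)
The plan is to mimic the proof of Theorem~\ref{t8}, using the transverse period-$p$ orbit that is guaranteed on $\overline{C}_p$ as the anchor that replaces the forced interval fixed point produced by the intermediate value theorem. I would first verify that $C_p$ is open in $C_{\lambda,1}(\mathbb{S}^1)$ (transversality of a periodic orbit is stable under $C^0$ perturbations), hence a dense open subset of $\overline{C}_p$, so it suffices to work inside $C_p$. Within $C_p$, the analogue of the piecewise affine density result (Proposition~8 of \cite{BT}) is obtained by cutting the circle at a point disjoint from the distinguished orbit, lifting to a fundamental domain of the universal cover, applying the interval piecewise affine approximation and taking the approximating map close enough to keep the distinguished period-$p$ orbit transverse.

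Next I would run the critical-connection elimination from the lemma preceding Theorem~\ref{t8} essentially verbatim on that fundamental domain; every perturbation used there is supported on an arbitrarily small interval and so can be arranged to avoid a fixed neighborhood of the transverse period-$p$ orbit, which therefore persists throughout the procedure. This yields a countable dense family $\{g_i\}\subset \mathrm{PA}_\lambda(\mathbb{S}^1)\cap C_p$ such that no $g_i$ has slope $\pm 1$ and every point of $\fix(g_i,kp)$ is transverse; in particular, for every $k\ge 1$ the transverse period-$p$ orbit contributes $p$ transverse points to $\fix(g_i,kp)$, so $\fix(g_i,kp)$ is a nonempty finite set of transverse fixed points of $g_i^{kp}$.

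From here the construction copies the proof of Theorem~\ref{t8}: apply a regular $(2n_i+1)$-fold window perturbation of diameter $a_i$ around one representative of each orbit in $\fix(g_i,kp)$ to obtain $h_i$, set
\[
G := \bigcap_{j\in\N}\bigcup_{i\ge j} B(h_i,\delta_i),
\]
and choose $\delta_i\to 0$ fast enough that the four items transfer as in Theorem~\ref{t8}. The Cantor property of $\fix(f,kp)$ follows from closedness, the no-interval property (guaranteed by the absence of $\pm 1$ slopes) and the accumulation property (each fixed point lies in a covering interval around an orbit of $\fix(g_i,kp)$ where the window perturbation has produced many nearby fixed points). The lower box dimension zero and Hausdorff dimension zero estimates for $\fix(f,kp)$, and the upper box dimension one estimate for the Cantor subset $P_{kp}$ coming from the $(2n_i+1)$-fold perturbation around the distinguished orbit, both follow by the counting argument of Theorem~\ref{t8}; the conclusion $\mathrm{dim}_H\per(f)=0$ is then immediate from the inclusion $\fix(f,n)\subset \fix(f,np)$, which reduces an arbitrary period $n$ to a multiple of $p$.

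The main obstacle, and the reason the conclusion for $\per(f,kp)$ weakens from ``$\per(f,kp)$ is a Cantor set'' to ``some Cantor subset $P_{kp}\subset \per(f,kp)$ exists'', is the global rotational structure of degree $1$ circle maps: one cannot rule out additional isolated periodic orbits whose periods divide $kp$ arising far from the distinguished orbit, so only the periodic points we explicitly manufacture by the window perturbation can be controlled. Similarly, the restriction to $\overline{C}_p$ is forced because a degree $1$ circle map need not carry any periodic point at all. The single technical bookkeeping point throughout is to perform every approximation outside a fixed neighborhood of the transverse period-$p$ orbit so that the defining property of $C_p$ survives at every stage.
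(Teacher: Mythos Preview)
Your proposal is correct and follows the paper's approach; indeed the paper offers no separate argument for this theorem beyond the single sentence ``In this setting the proof of Theorem~\ref{t8} yields the following result,'' and your outline supplies precisely the adaptations that sentence leaves implicit: restricting to the open dense set $C_p$, lifting to a fundamental domain to import the piecewise-affine density and critical-connection lemmas, anchoring the nonemptiness of $\fix(g_i,kp)$ on the persistent transverse period-$p$ orbit, and then running the window-perturbation and counting arguments verbatim. Your observation that $\fix(f,n)\subset\fix(f,np)$ cleanly reduces item~(4) to item~(3) is exactly the right bookkeeping.

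One small inaccuracy worth noting: your stated reason for the weakening in item~(2) (``one cannot rule out additional isolated periodic orbits \ldots\ far from the distinguished orbit'') is not quite the issue, since you yourself arrange that \emph{every} point of $\fix(g_i,kp)$ is transverse and is surrounded by a window perturbation, not just the distinguished orbit. The genuine difficulty is rather that $\per(f,kp)$ need not be closed: limits of exact-period-$kp$ points can land on points of strictly smaller period dividing $kp$, and the degree-$1$ circle setting gives no mechanism to separate these as cleanly as in the interval proof. This does not affect your construction of the Cantor subset $P_{kp}$, which is all the theorem asserts.
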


\begin{remark} As in the interval case, the
 proof of the previous two results  can easily be adapted to show that the generic degree $d$ map in $C(\mathbb{S}^1)$ has the same properties, again this does not seem to be known in our setting.
\end{remark}

To interpret this result we investigate the set  $C_{\infty} := C_{\lambda,1}(\mathbb{S}^1) \setminus \cup_{p \ge 1} \overline{C_p}$.  As we already saw in the proof of Theorem \ref{t8}, a periodic point can
be transformed to a transverse periodic point by an arbitrarily small perturbation of the map, thus
the set $C_{\infty}$  consists of maps without periodic points. Using the same argument we see that $\cup_{p \ge 1} \overline{C_p}$ contains an open dense set. Therefore, $C_{\infty}$ is nowhere dense in $C_{\lambda,1}(\mathbb{S}^1)$.

\begin{proposition}
The set $C_{\infty}$ consists of irrational circle rotations.
\end{proposition}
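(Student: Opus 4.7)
The plan is to show every $f\in C_\infty$ is an irrational rotation; the reverse containment is immediate, since irrational rotations are periodic-point-free and the preceding paragraph identifies $C_\infty$ with the periodic-point-free maps in $C_{\lambda,1}(\mathbb{S}^1)$.

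First I would extract an irrational rotation number. Fix $f\in C_\infty$ and a lift $\tilde f\colon\mathbb{R}\to\mathbb{R}$ satisfying $\tilde f(x+1)=\tilde f(x)+1$. Since $f$ has no periodic points, for every $n\geq 1$ the continuous $1$-periodic function $x\mapsto \tilde f^n(x)-x$ avoids $\mathbb{Z}$, so it is trapped in a single open interval $(k_n,k_n+1)$. The near-additivity $k_{n+m}\in\{k_n+k_m,k_n+k_m+1\}$ yields, via Fekete's lemma, the rotation number $\alpha=\lim_n k_n/n$, independent of $x$; moreover $\alpha$ must be irrational, for if $\alpha=p/q$ then $\tilde f^q-\mathrm{id}-p$ would be continuous, never zero, and hence bounded away from $0$, which is incompatible with $(\tilde f^q(x)-x)/q\to p/q$. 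In particular the rotation set of $f$ reduces to $\{\alpha\}$.

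The key nonelementary input, which I expect to be the main obstacle, is the classical semi-conjugacy result for degree-one circle maps (Misiurewicz and others): a continuous degree-one circle map whose rotation set is a single irrational $\alpha$ is semi-conjugated to $R_\alpha$ by a continuous, weakly monotone, degree-one surjection $h\colon\mathbb{S}^1\to\mathbb{S}^1$ with $h\circ f=R_\alpha\circ h$. Having such an $h$, I would upgrade it to a homeomorphism using $\lambda$-preservation. Poincar\'e recurrence prohibits any nondegenerate open arc $J\subset\mathbb{S}^1$ from satisfying $f^n(J)\cap J=\emptyset$ for all $n\geq 1$. If $h$ collapsed some nondegenerate arc $J$ to a point, then $h(f^n(J))=\{R_\alpha^n(h(J))\}$ would be pairwise distinct singletons (by irrationality of $\alpha$), so the arcs $f^n(J)$ would lie in pairwise disjoint fibres of $h$ and $J$ would be wandering — a contradiction. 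Hence $h$ is strictly monotone, so a homeomorphism, and $f$ is genuinely conjugated to $R_\alpha$.

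Finally I would show that $h$ itself is a rotation, forcing $f=R_\alpha$. The pushforward $h_*\lambda$ is $R_\alpha$-invariant, so unique ergodicity of the irrational rotation gives $h_*\lambda=\lambda$. Lifting $h$ to a strictly increasing $\tilde h\colon\mathbb{R}\to\mathbb{R}$ with $\tilde h(x+1)=\tilde h(x)+1$, the identity $\lambda(h([a,b]))=b-a$ translates into $\tilde h(b)-\tilde h(a)=b-a$, so $\tilde h$ is a translation and $h$ is a rotation. Since rotations commute, $f=h^{-1}\circ R_\alpha\circ h=R_\alpha$, completing the proof.
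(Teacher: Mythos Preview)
Your proof is essentially correct, but it takes a considerably longer route than the paper's. The paper observes that it suffices to show $f$ is invertible: once $f\in C_\infty$ is known to be a degree-one circle homeomorphism preserving $\lambda$, its lift is a strictly increasing map with $\tilde f(b)-\tilde f(a)=b-a$, hence a translation, so $f$ is a rotation—necessarily irrational since $f$ has no periodic points. To get invertibility, the paper invokes a structural result of Auslander--Katznelson \cite{AK}: if $f(x)=f(y)$ with $x\ne y$, then the ``non-return'' intervals $J_x,J_y$ coincide and are nondegenerate, and the forward images $f^n(J_x)$ are pairwise disjoint. Poincar\'e recurrence then gives the contradiction. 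So both arguments pivot on the same ``no wandering interval under $\lambda$-preservation'' obstruction; the paper simply applies it earlier, to rule out non-injectivity directly, and thereby avoids rotation numbers, the semi-conjugacy theorem, and unique ergodicity altogether.

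Two small comments on your write-up. First, the semi-conjugacy you need for possibly non-monotone degree-one maps with irrational rotation set is exactly the content of the Auslander--Katznelson paper the authors cite, so you are implicitly leaning on the same source. Second, your irrationality step is garbled as written: ``$(\tilde f^q(x)-x)/q\to p/q$'' is not a limit in $x$. What you presumably mean is that $\tilde f^q-\mathrm{id}-p$ is bounded away from $0$ by some $\eps>0$ uniformly, so telescoping gives $|\tilde f^{nq}(x)-x-np|\ge n\eps$, which contradicts $(\tilde f^{nq}(x)-x)/(nq)\to p/q$; this is easy to fix.
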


\begin{proof}
Clearly $C_{\infty}$ contains all irrational circle rotations.

We claim that any $f \in C_{\infty}$ must be invertible. 
For each point $z$ denote by $J_z$ the largest interval containing $z$ such that $f^n(z)\not\in J_z$
for all $n>0$.  
Suppose that $f(x)=f(y)$ for some $x \ne y$.
Then by \cite[Theorem~1]{AK} we obtain that $J_x=J_y$, in particular both are nondegenerate intervals.
By the same result intervals $f^n(J_x)$ are pairwise disjoint for all $n\geq 0$.
The Poincar\'e recurrence theorem states that almost every point is recurrent, which is a contradiction since interior of $J_x$
consists of non-recurrent points. The proof is completed.
\end{proof}

\section{Shadowing is generic for Lebesgue measure preserving interval and circle maps}\label{sec:shadowing}

First we recall the definition of shadowing and its related extensions that we will work with in the rest of the paper. For $\delta> 0$, a sequence $(x_n)_{n\in \N_0}\subset I$ s called a \emph{$\delta$-pseudo orbit} of $f\in C(I)$ if $d(f(x_n), x_{n+1})< \delta$ for every $n\in \N_0$. A \emph{periodic $\delta$-pseudo orbit} is a $\delta$-pseudo orbit for which there exists $N\in\N_0$ such that $x_{n+N}=x_n$, for all $n\in \N_0$. We say that the sequence $(x_n)_{n\in\N_0}$ is an \emph{asymptotic pseudo orbit} if $\lim_{n\to\infty} d(f(x_n),x_{n+1})=0$.
If a sequence $(x_n)_{n\in\N_0}$ is a $\delta$-pseudo orbit and an asymptotic pseudo orbit then we simply say that it is an asymptotic $\delta$-pseudo orbit.

\begin{defn}\label{def:shadowing}
We say that a map $f\in C(I)$ has the:
\begin{itemize}
 \item \emph{shadowing property} if for every $\eps > 0$ there exists $\delta >0$ satisfying the following condition: given a $\delta$-pseudo orbit $\mathbf{y}:=(y_n)_{n\in \N_0}$ we can find a corresponding point $x\in I$ which $\eps$-traces $\mathbf{y}$, i.e.,
$$d(f^n(x), y_n)<  \eps \text{ for every } n\in \N_0.$$
\item
\emph{periodic shadowing property} if for every $\eps>0$ there exists $\delta>0$ satisfying the following condition: given a periodic $\delta$-pseudo orbit $\mathbf{y}:=(y_n)_{n\in\N_0}$ we can find a corresponding periodic point $x \in I$, which $\eps$-traces $\mathbf{y}$.
\item \emph{limit shadowing} if for every sequence $(x_n)_{n\in \N_0}\subset I$ so that $$d(f(x_n),x_{n+1})\to 0 \text{ when } n\to \infty$$
there exists $p\in I$ such that
$$d(f^n(p),x_n)\to 0 \text{ as } n\to \infty.$$
\item \emph{s-limit shadowing} if for every $\eps>0$ there exists $\delta>0$ so that
\begin{enumerate}
\item  for every $\delta$-pseudo orbit $\mathbf{y}:=(y_n)_{n\in \N_0}$ we can find a corresponding point $x\in I$ which $\eps$-traces $\mathbf{y}$,
\item  for every asymptotic $\delta$-pseudo orbit $\mathbf{y}:=(y_n)_{n\in \N_0}$ of $f$, there is $x\in I$ which $\eps$-traces $\mathbf{y}$ and
$$ \lim_{n\to \infty}d(y_n,f^n(x)) = 0.$$
\end{enumerate}
\end{itemize}
\end{defn}

 The notions of shadowing and periodic shadowing are classical but let us comment less classical notions of limit and s-limit shadowing. While limit shadowing seems completely different than shadowing, it was proved in \cite{Limit} that transitive maps with limit shadowing also have shadowing property. 
 In general it can happen that for an asymptotic pseudo orbit which is also a $\delta$-pseudo orbit, the point which $\eps$-traces it and the point which traces it in the limit are different \cite{Barwel}. This shows that possessing a common point for such a tracing is a stronger property than the shadowing and limit shadowing properties together and this property introduced in \cite{Sakai} is called the s-limit shadowing.

\begin{obs}\label{rem:implies}
S-limit shadowing implies both classical and limit shadowing.
\end{obs}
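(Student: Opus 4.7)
My plan is to dispatch the two implications separately. The implication that s-limit shadowing implies classical shadowing is essentially a tautology: condition~(1) in Definition~\ref{def:shadowing} is literally the shadowing condition with the same $(\varepsilon,\delta)$-pair, so no work beyond unpacking the definitions is needed.

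For limit shadowing, the idea is to apply condition~(2) of s-limit shadowing to a suitable tail of the given asymptotic pseudo orbit. Given such a sequence $(x_n)_{n\in\N_0}$, I would fix any $\varepsilon>0$ (its particular value is irrelevant) and let $\delta>0$ be the constant produced by s-limit shadowing for this $\varepsilon$. Since $d(f(x_n),x_{n+1})\to 0$, there exists $N\in\N_0$ with $d(f(x_n),x_{n+1})<\delta$ for every $n\ge N$, so the shifted sequence $\tilde x_n:=x_{n+N}$ is both a $\delta$-pseudo orbit and asymptotic, i.e., an asymptotic $\delta$-pseudo orbit in the sense of the paper's terminology. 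Condition~(2) then provides $\tilde p\in I$ with $d(f^n(\tilde p),\tilde x_n)\to 0$ as $n\to\infty$.

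The one step that needs a little care -- and is thus the main obstacle -- is to lift $\tilde p$, which only traces the tail, to a single point $p\in I$ whose forward orbit approaches the full original sequence. It suffices to find $p$ with $f^N(p)=\tilde p$, for then $d(f^n(p),x_n)=d(f^{n-N}(\tilde p),x_{n})\to 0$ as $n\to\infty$, which is precisely the limit shadowing conclusion. Such $p$ exists whenever $f^N$ is surjective, and this is automatic in the setting of the paper: for any $f\in C_{\lambda}(I)$ the image $f(I)$ is a closed subinterval of $I$, and measure preservation yields $\lambda(f(I))=\lambda(f^{-1}(f(I)))=\lambda(I)=1$, forcing $f(I)=I$ and hence $f^N(I)=I$. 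In a fully abstract $C(I)$ setting without surjectivity one would first have to modify the initial $N$ terms of $(x_n)$ to build a genuine $\delta$-pseudo orbit agreeing with $(x_n)$ from index $N$ onwards before applying~(2), but this extra bookkeeping is unnecessary for maps in $C_{\lambda}(I)$, which is the only context in which the observation is invoked.
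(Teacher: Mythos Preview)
The paper states this observation without proof, treating both implications as immediate from Definition~\ref{def:shadowing}. Your argument is correct: condition~(1) is literally the shadowing property, and for limit shadowing your tail-shift combined with the surjectivity of every $f\in C_{\lambda}(I)$ (which you justify correctly via measure preservation) cleanly produces the required tracing point; since the observation is only applied to maps in $C_{\lambda}(I)$, this suffices for the paper's purposes.
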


\subsection{Proof of genericity of shadowing} The main step in the proof of genericity of 
 the shadowing property in the context of maps from $C_{\lambda}(I)$
 is the following lemma.

\begin{lem}\label{lem:shadowingdense}
For every $\eps>0$ and every map $f\in C_\lambda(I)$
 there are
$\delta<\frac{\eps}{2}$ and $F\in C_{\lambda}(I)$ such that:
\begin{enumerate}
\item $F$ is piecewise affine and $\rho(f,F)<\frac{\eps}{2}$,
\item if $g\in C_\lambda(I)$ and $\rho(F,g)<\delta$ then every $\delta$-pseudo orbit $\mathbf{x}:=\set{x_i}_{i=0}^\infty$ for $g$ is $\eps$-traced by a point $z\in I$.
Furthermore, if $\mathbf{x}$ is a periodic sequence, then $z$ can be chosen to be a periodic point.
\end{enumerate}
\end{lem}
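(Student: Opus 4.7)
The plan is to build a piecewise affine Lebesgue preserving map $F$ whose combinatorial structure forces every sufficiently close $g\in C_{\lambda}(I)$ to admit enough covering relations to shadow each $\delta$-pseudo orbit; I would then handle the periodic case by observing that the induced sequence of partition elements is eventually periodic. First, using density of $\mathrm{PA}_{\lambda}(I)$ in $C_{\lambda}(I)$ (Proposition~8 of \cite{BT}), pick $F_{0}\in\mathrm{PA}_{\lambda}(I)$ with $\rho(F_{0},f)<\eps/4$; let $\sigma_{\min}$ and $\sigma_{\max}$ denote the minimum and maximum absolute slopes of $F_{0}$. I would subdivide each monotone piece of $F_{0}$ into equal subintervals of length between $\alpha/2$ and $\alpha$, where $\alpha$ satisfies $\sigma_{\max}\alpha<\eps/8$, and then apply a regular $m$-fold window perturbation (Definition~\ref{def:perturb}) on each refined subinterval with $m$ an odd integer to be chosen large. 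The resulting map $F\in\mathrm{PA}_{\lambda}(I)$ has three features I will exploit: every monotone piece $J$ of $F$ has length at most $\alpha/m$; the absolute slope of $F$ on $J$ is at least $m\sigma_{\min}$, so $|F(J)|\ge M:=\sigma_{\min}\alpha/2$; and the window perturbation shifts values only within $F_{0}$-image intervals of length $<\eps/8$, so $\rho(F,F_{0})<\eps/8$ and hence $\rho(F,f)<\eps/2$, proving item (1).

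Next I would set $\delta:=\min(\eps/16,\,M/4)$ and take $m$ large enough that $\alpha/m<\eps/16$. Let $\mathcal{P}$ be the partition of $I$ into monotone pieces of $F$. Given $g\in B(F,\delta)$ and a $\delta$-pseudo orbit $\mathbf{x}=(x_{n})$ of $g$, I select $J_{a_{n}}\in\mathcal{P}$ inductively so that $g(J_{a_{n-1}})\supset J_{a_{n}}$ and $d(x_{n},J_{a_{n}})\le 3\delta+2\alpha/m$. Writing $F(J_{a_{n-1}})=[p,q]$, the intermediate value theorem applied to the endpoint values of $g$ on $J_{a_{n-1}}$ (within $\delta$ of those of $F$) shows $g(J_{a_{n-1}})\supset[p+\delta,q-\delta]$, an interval of length $\ge M-2\delta\gg\alpha/m$; since $d(x_{n},F(x_{n-1}))\le d(x_{n},g(x_{n-1}))+d(g(x_{n-1}),F(x_{n-1}))<2\delta$, the point $x_{n}$ is within $2\delta$ of $[p,q]$, so at least one partition element contained in $[p+\delta,q-\delta]$ lies within $3\delta+2\alpha/m$ of $x_{n}$, and I pick such a $J_{a_{n}}$. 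A standard nested-intervals argument using the covering relations $g(J_{a_{n-1}})\supset J_{a_{n}}$ yields $z\in J_{a_{0}}$ with $g^{n}(z)\in J_{a_{n}}$ for every $n$, whence $d(g^{n}(z),x_{n})\le|J_{a_{n}}|+d(x_{n},J_{a_{n}})<\eps$.

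For periodic $\mathbf{x}$ of period $N$, the selection rule iterated $N$ times gives a deterministic map $\mathcal{P}\to\mathcal{P}$, so the forward orbit of $J_{a_{0}}$ under this map enters a cycle: there exist $n_{0}\ge 0$ and $p$ with $N\mid p$ such that $J_{a_{n+p}}=J_{a_{n}}$ for all $n\ge n_{0}$. Applying the intermediate value theorem to $g^{p}-\mathrm{id}$ on a subinterval $V\subset J_{a_{n_{0}}}$ with $g^{p}(V)=J_{a_{n_{0}}}$ yields a periodic point $\tilde z$ of $g$ with $g^{k}(\tilde z)\in J_{a_{n_{0}+k}}$ for all $k\ge 0$; then $z:=g^{s}(\tilde z)$, with $s\ge 0$ chosen so that $s\equiv -n_{0}\pmod{p}$, is periodic for $g$, and the $N$-periodicity of $(x_{n})$ together with $N\mid p$ gives $x_{n_{0}+n+s}=x_{n}$, so $z$ $\eps$-traces $\mathbf{x}$ from index $0$. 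The main obstacle is the construction of $F$: one needs simultaneously short monotone pieces, large minimum image length $M$, and proximity to $f$, all inside $\mathrm{PA}_{\lambda}(I)$; the window perturbation of Definition~\ref{def:perturb} is what makes this possible, because naive subdivision would shrink images and slopes proportionally, whereas the $m$-fold perturbation multiplies slopes by $m$ without enlarging images, producing the key separation $M\gg\alpha/m$ on which the covering relations—and hence the entire tracing argument—rely.
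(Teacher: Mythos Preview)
Your overall strategy—window-perturb a piecewise affine approximation to manufacture covering relations and then thread the pseudo orbit through a chain of covered intervals—is the paper's strategy, and your treatment of the periodic case via eventual periodicity of the selected intervals is essentially the same as well. The gap is in the inductive step.

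You argue that $d(x_n,F(x_{n-1}))<2\delta$ and conclude that $x_n$ lies within $2\delta$ of $[p,q]=F(J_{a_{n-1}})$; this needs $F(x_{n-1})\in[p,q]$, i.e.\ $x_{n-1}\in J_{a_{n-1}}$. But your inductive hypothesis is only $d(x_{n-1},J_{a_{n-1}})\le 3\delta+2\alpha/m$, not containment. Since $|J_{a_{n-1}}|\le\alpha/m$ while $3\delta$ can be as large as $3M/4=3\sigma_{\min}\alpha/8$, the point $x_{n-1}$ will typically sit several monotone pieces away from $J_{a_{n-1}}$, possibly in a neighbouring $\alpha$-subinterval $S'$; then $F(x_{n-1})\in F_0(S')$ rather than $[p,q]$, the distance from $x_n$ to $[p,q]$ picks up an extra term of order $\sigma_{\max}\alpha$, and nothing prevents this error from accumulating at every boundary crossing. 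The induction does not close. The paper avoids this with a two-scale construction: it fixes a \emph{coarse} partition $\{[a_i,a_{i+1}]\}$ first, arranged so that the image of each coarse cell overshoots every partition point it meets by at least $2\delta$, and only then applies the $m$-fold perturbation on each coarse cell. The tracking intervals $J_m$ are the tiny $\delta$-neighbourhoods of the coarse endpoints, which satisfy $F([a_i,a_i+\delta])=F([a_{i+1}-\delta,a_{i+1}])=f([a_i,a_{i+1}])$, and the invariant carried through the induction is that $x_m$ and $J_m$ lie in the \emph{same coarse cell}. Hence $F(J_m)$ automatically equals the $f$-image of that cell, which by the overshoot condition already contains (with $2\delta$ to spare) an endpoint $\delta$-interval of whichever coarse cell $x_{m+1}$ falls into. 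Aligning the pseudo orbit with the coarse partition rather than with the fine monotone pieces is precisely what keeps the error bounded instead of cumulative; your construction lacks this alignment.
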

\begin{proof}
\noindent \textbf{Step 1. Partition.}
First, let 	$0<\gamma<\eps/2$  be such that, if $|a-b|<\gamma$ then $|f(a)-f(b)|<\eps/2$.

 Let us assume that $f$ is piecewise affine with the absolute value of the slope at least $4$ on every piece of monotonicity. Indeed, we can assume that $f$ is piecewise affine due to Proposition 8 from \cite{BT}. Furthermore, we can also assume that the absolute value of the slope of $f$ is at least $4$ on every piece of monotonicity by using regular window perturbations from Definition~\ref{def:perturb} and thus we can approximate arbitrarily well any piecewise affine map from  $C_\lambda(I)$ by a piecewise affine map from  $C_\lambda(I)$ having absolute value of the slope at least  $4$ on every piece of monotonicity. We set $\gamma$ to be smaller than the length of the shortest piece of monotonicity of $f$.
 Since $f$  preserves the Lebesgue measure it must have non-zero slope on every interval of monotonicity.
Thus we can assume we have a partition $0=a_0<a_1<\ldots < a_n<a_{n+1}=1$ such that:
\begin{enumerate}
	\item[(i)] $a_{i+1}-a_i\leq \gamma$ for $i=0,\ldots,n$,
	\item[(ii)] if $f(a_i)\not\in \{0,1\}$ then $f(a_i)\neq a_j$ for every $j$.
	\item[(iii)] if $f(a_i)\not\in \{0,1\}$ then $a_i\not\in f(\crit(f))$.
\end{enumerate}

\noindent \textbf{Step 2. Perturbation.}
By the definition of the partition, there is $\delta>0$ such that for each $j=0,\ldots, n$ we have
$$
\{i : f([a_j,a_{j+1}])\cap (a_i,a_{i+1})\neq \emptyset\}= \{i : B(f([a_j,a_{j+1}]),3\delta)\cap (a_i,a_{i+1})\neq \emptyset\}.
$$
We may also assume that $\delta$ is sufficiently small, so that if $f([a_j,a_{j+1}])\cap [a_i,a_{i+1}]\neq \emptyset$ then
\begin{equation}\label{eq:cover1}
f([a_j,a_{j+1}])\supset [a_i, a_i+2\delta] \quad \text{ or }\quad  f([a_j,a_{j+1}])\supset [a_{i+1}-2\delta,a_{i+1}].
\end{equation}
Now, repeating the construction behind Proposition~8 of \cite{BT} we construct a map $F$ by replacing each $f| [a_i,a_{i+1}]$ by its regular $m$-fold window perturbation (see Definition~\ref{def:perturb} and Figure~\ref{fig:perturbations}), with odd $m$
and large enough to satisfy $1/m<\delta$. This way $F$ is still piecewise affine and its minimal slope is larger than the maximal slope of $f$ and such that
\begin{equation}\label{eq:cover2}
F([a_i,a_i+\delta])=F([a_{i+1}-\delta,a_{i+1}])=F([a_i,a_{i+1}])=f([a_i,a_{i+1}]).
\end{equation}
Since $C_{\lambda}(I)$ is invariant under window perturbations we conclude $F\in C_\lambda(I)$.

\noindent \textbf{Step 3. $\eps$-shadowing.}  For some $x\in I$ in what follows denote $\mathrm{dist}(x,J):=\inf\{d(x,y): y\in J\subset I\}$. Also, for an interval $J\subset I$ let $\diam(J):=\sup\{d(x,y): x,y\in J\}$.
Take any $g\in C_\lambda(I)$ such that $\rho(F,g)<\delta$ and let $\mathbf{x}:=\set{x_i}_{i=0}^\infty$ be a $\delta$-pseudo orbit for $g$. We claim that there is a sequence of intervals $J_i$ such that
\begin{enumerate}
\item $\diam J_i \leq \gamma$ and if $i>0$ then $J_{i}\subset g(J_{i-1})$,\label{con:s1}
\item $\mathrm{dist}(x_{i},J_i)< \gamma$,\label{con:s2}
\item for every $i$ there is $p$ such that $F(J_i)=F([a_p,a_{p+1}])$ and $x_i\in [a_p,a_{p+1}]$.\label{con:s3}
\end{enumerate}
Take $p\geq 0$ such that $[a_p,a_{p+1}]\ni x_0$ and put $J_0=[a_p,a_{p+1}]$. Then conditions \eqref{con:s1}--\eqref{con:s3} are satisfied for $i=0$.

Next assume that for $i=0,\ldots,m$ there are intervals $J_i$ such that conditions \eqref{con:s1}--\eqref{con:s3} are satisfied. We will show how to construct $J_{m+1}$.
Denote $F(J_m)=:[a,b]$. By \eqref{con:s3} and the definition of $F$, namely \eqref{eq:cover1} and \eqref{eq:cover2}, there are nonnegative
integers $\hat{i}, \hat{j}$, $\hat{j}-\hat{i}\geq 2$ such that
$$
[a_{\hat{i}+1}-2\delta, a_{\hat{j}-1}+2\delta]\subset [a,b]\subset [a_{\hat{i}}, a_{\hat{j}}].
$$
Furthermore, if $a_{\hat i}\neq 0$ then $a>a_{\hat{i}}+2\delta$ and if $b_{\hat j}<1$ then $b<a_{\hat{j}}-2\delta$. From this it follows that $B([a,b],2\delta)\subset [a_{\hat{i}}, a_{\hat{j}}]$.
Since $\rho(F,g)<\delta$ it holds
\begin{equation}\label{eq:cover3}
[a_{\hat{i}+1}-\delta, a_{\hat{j}-1}+\delta]\subset g(J_m)\subset [a_{\hat{i}}, a_{\hat{j}}]
\end{equation}
and
$$
g(x_m)\in B(F(x_m),\delta)\subset B([a,b],\delta)
$$
and therefore
$$
x_{m+1}\in  B([a,b],2\delta)\subset [a_{\hat{i}}, a_{\hat{j}}].
$$
Then there is $\hat{i}\leq q < \hat{j}$ such that $x_{m+1}\in [a_q,a_{q+1}]$ and if we put $L:=[a_q,a_q+\delta]$ and $R :=[a_{q+1}-\delta, a_{q+1}]$
then by \eqref{eq:cover3} it follows $L\subset g(J_m)$ or $R\subset g(J_m)$. Now, we put $J_{m+1}=L$ or $J_{m+1}=R$ depending on the situation, obtaining that
$J_{m+1}\subset g(J_m)$. Additionally, $\mathrm{dist}(x_{m+1},J_{m+1})<\gamma$ since both $L$ and $R$ are contained in $[a_q,a_{q+1}]$ and by the definition of $F$ it follows from \eqref{eq:cover2} that
\begin{equation*}
F(J_{m+1})=F(L)=F(R)=F([a_q,a_{q+1}]).
\label{def:FLR}    
\end{equation*}
This finishes the inductive construction.

By \eqref{con:s1} there is a point $z\in I$ such that $z\in \bigcap_{i=0}^\infty g^{-i}(J_i)$.
Then $g^i(z)\in J_i$ for every $i\geq 0$ and so by \eqref{con:s1} and \eqref{con:s2} we obtain that
$$
d(g^i(z),x_i)\leq \diam J_i + \mathrm{dist}(x_i,J_i)< 2\gamma<\eps.
$$
We have just proved that the pseudo orbit $\mathbf{x}$ is $\eps$-traced by the point $z$.

To finish the proof, let us assume that additionally $\mathbf{x}$ is periodic. Then we will modify sequence $J_i$
in the following way. Let $N$ be the period of $\mathbf{x}$. Since there are finitely many choices of interval $J_i$, there are nonnegative integers $k<s$
such that $J_{kN}=J_{sN}$. Define a sequence of interval $(L_i)^{\infty}_{i=0}\subset I$ by the formula $L_i=J_{i+kN (\text{mod }(s-k)N)}$. Condition \eqref{con:s1} implies that there is
$z\in \bigcap_{i=0}^\infty g^{-i}(L_i)$. Since sequence $L_i$ is periodic, in particular since $L_{(s-k)N}$ is covered by  $g^{(s-k)N}(L_0)$,
we may select $z$ being a periodic point. But $x_i=x_{i (\text{mod }N)}=x_{i+kN (\text{mod }(s-k)N)}$, so
\eqref{con:s2}  implies that $z$ is $\eps$-tracing $\mathbf{x}$.
\end{proof}

\begin{proof}[Proof of Theorem~\ref{t-pshadow}]
Fix $\{\eps_n\}_{n\in \N}$, where $\eps_n> 0$ and $\eps_n\to 0$ as $n\to \infty$. Let us also fix a dense collection of maps $\{f_k\}_{k\in \N}\subset C_{\lambda}(I)$. Define the set
$$
A_n:=\{f\in C_{\lambda}(I): \exists \delta>0 \text{ so that every }  \delta\text{-pseudo orbit is }  \eps_n\text{-traced}\}.
$$
Let us fix $k,n\in \mathbb{N}$. By Lemma~\ref{lem:shadowingdense} it holds that for every $f\in C_{\lambda}(I)$ and for all integers $s>1/\eps_n$ there exist $F_{k,s}\in C_{\lambda}(I)$ and $\xi_{k,s}>0$ so that $\rho(F_{k,s},f_{k})<1/s$ and $B(F_{k,s},\xi_{k,s})\subset A_n$. Define
$$
Q_n:=\bigcup_{s>\frac{1}{\eps_n}}\bigcup_{k=1}^{\infty} B(F_{k,s},\xi_{k,s})\subset A_n.
$$
Observe that since $f_k$ is in the closure of $Q_n$ for all $k\in\N$ it follows that $Q_n$ is dense in $C_{\lambda}(I)$. Also $B(F_{k,s},\xi_{k,s})$ is an open set and thus $Q_n$ is open in $C_{\lambda}(I)$ as well.
Now, taking the intersection of the collection $\{Q_n\}_{n\in\N}$ we thus get a dense $G_\delta$ set $Q\subset C_{\lambda}(I)$. Clearly, if $f\in Q$ then for every $\eps>0$ there is $\delta>0$ so that every $\delta$-pseudo orbit is $\eps$-traced by some trajectory of $f$ and if $\delta$-pseudo orbit is periodic then such trajectory of $f$ can be required to be periodic as well.
\end{proof}

\subsection{S-limit shadowing for Lebesgue measure preserving interval and circle maps}\label{subsec:s-limit}

In this subsection we address the level of occurrence of the strongest of the above presented notions related with shadowing.

Let us put $\mathrm{LS}_{\lambda}(I):=\{f\in C_{\lambda}(I)\colon~f\text{ has the s-limit shadowing property}\}$.

\begin{proposition}\label{p:1}
	The set $\mathrm{LS}_{\lambda}(I)$ is dense in $C_{\lambda}(I)$.
\end{proposition}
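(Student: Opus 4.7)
The plan is to establish that the piecewise affine approximant $F$ from Lemma~\ref{lem:shadowingdense}, possibly after further refinement, has s-limit shadowing; density of $\mathrm{LS}_\lambda(I)$ in $C_\lambda(I)$ then follows because $F$ can be made arbitrarily close to any given $f\in C_\lambda(I)$.

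My first step is to strengthen the construction of Lemma~\ref{lem:shadowingdense} by choosing the multiplicity $m$ in Definition~\ref{eq:1} so large that $F$ has minimum slope $M\geq 2$ on every piece of monotonicity, and then to build $F$ as a uniform limit of successive window perturbations $F_0,F_1,F_2,\ldots$: take $F_0$ from Lemma~\ref{lem:shadowingdense}, and define $F_{k+1}$ by applying a regular $m_{k+1}$-fold window perturbation to $F_k$ on each piece of its monotonicity, with multiplicities $m_k$ chosen large enough that $\sum_k \rho(F_{k+1},F_k) < \eps$. By Proposition~\ref{prop:1}, the uniform limit $F$ lies in $C_\lambda(I)$. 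Let $\mathcal{P}_k$ denote the partition into pieces of monotonicity of $F_k$; these form a nested sequence $\mathcal{P}_0 \prec \mathcal{P}_1 \prec \cdots$ with mesh tending to $0$ (each element of $\mathcal{P}_k$ has length at most $M^{-k}$), and by construction an analogue of the covering property \eqref{eq:cover2} holds at every level: each element $[c,d]$ of $\mathcal{P}_k$ admits extremal sub-intervals $[c,c+\delta_k]$ and $[d-\delta_k,d]$ whose $F$-image coincides with the full image of the enclosing element of $\mathcal{P}_{k-1}$.

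Next, given $\eps_0>0$ with the corresponding $\delta$ from Lemma~\ref{lem:shadowingdense}, I would trace an asymptotic $\delta$-pseudo orbit $\mathbf{x}=(x_i)_{i\geq 0}$ of $F$ by running the interval-chaining argument of that lemma at a partition level that increases with time. Pick $\delta_k\downarrow 0$ and integers $0=N_0<N_1<\cdots$ with $d(F(x_i),x_{i+1})<\delta_k$ for all $i\geq N_k$, and for $N_k\leq i<N_{k+1}$ draw $J_i$ from $\mathcal{P}_k$ rather than $\mathcal{P}_0$. At each transition $i=N_k$, the level-$(k-1)$ covering property ensures that $F(J_{N_k-1})$ is the full image of an element of $\mathcal{P}_{k-1}$, hence (by nesting) also a union of entire elements of $\mathcal{P}_k$; choosing one within $\delta_k$ of $x_{N_k}$ and taking its extremal $\delta_k$-thin sub-interval supplies $J_{N_k}$. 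Since $\mathrm{diam}(J_i)\leq M^{-k}$ and $\mathrm{dist}(x_i,J_i)<M^{-k}$ for $N_k\leq i<N_{k+1}$, any $z\in \bigcap_{i\geq 0}F^{-i}(J_i)$ (nonempty by the finite intersection property coming from $J_{i+1}\subset F(J_i)$) both $\eps_0$-traces $\mathbf{x}$ and does so asymptotically, establishing s-limit shadowing of $F$.

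The main obstacle I expect is the compatibility of the nested window perturbations with both Lebesgue measure preservation and the propagation of the covering property \eqref{eq:cover2} through all refinement levels. In particular, each perturbation $F_{k+1}$ obtained from $F_k$ must not destroy the covering conditions already established for $\mathcal{P}_0,\ldots,\mathcal{P}_{k-1}$, and the total size $\sum_k \rho(F_{k+1},F_k)$ must remain smaller than $\eps$. This delicate bookkeeping is reminiscent of the construction in \cite{MazOpr}, but the Lebesgue-preserving constraint demands the finer control of perturbations already exercised in Lemma~\ref{lem:shadowingdense}; in particular one must ensure that each window perturbation uses odd multiplicity so that the perturbed map agrees with its predecessor at the endpoints of each piece, and that the perturbation diameters decrease geometrically so that summability and the covering structure are both preserved in the limit.
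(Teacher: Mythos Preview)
Your overall strategy matches the paper's exactly: construct a limit of successive window perturbations that create a nested sequence of partitions, then trace asymptotic pseudo orbits by running the interval-chaining argument of Lemma~\ref{lem:shadowingdense} at progressively finer levels. The paper carries this out by iterating Lemma~\ref{lem:shadowingdense} itself: each $g_n$ is an $(\eps_n,\mathcal{P}_n,\gamma_n,\delta_n,m_n)$-perturbation of $g_{n-1}$, with $\mathcal{P}_n$ chosen to refine $\mathcal{P}_{n-1}$ and to satisfy the genericity conditions (ii), (iii) of Step~1 of that lemma; the nested balls $B(g_n,\delta_n)$ then contain the limit $G$, and the conclusions of the lemma---which are stated for \emph{every} map in the ball, not just $g_n$---transfer directly to $G$ at each scale.

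There is, however, a concrete gap in your implementation. You take $\mathcal{P}_k$ to be the partition into pieces of monotonicity of $F_k$ and assert that ``$F(J_{N_k-1})$ is the full image of an element of $\mathcal{P}_{k-1}$, hence (by nesting) also a union of entire elements of $\mathcal{P}_k$.'' The first clause is fine, but the second is false in general: the image of a monotone branch is an interval bounded by \emph{critical values} of $F_0$, and there is no reason for these to coincide with \emph{critical points} of $F_k$ (i.e., endpoints of $\mathcal{P}_k$). So the image will typically overlap only partially with the boundary elements of $\mathcal{P}_k$, and if $x_{N_k}$ lands in such a boundary element you cannot select the required extremal sub-interval $J_{N_k}$. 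The same obstruction appears within a single level, not only at transitions. The paper circumvents this precisely by \emph{not} using pieces of monotonicity: the partitions $\mathcal{P}_n$ are chosen via Lemma~\ref{lem:shadowingdense}, whose conditions (ii)--(iii) force the covering relation \eqref{eq:cover1} (images overlap each partition cell they meet by at least $2\delta$), and this covering is stable under $\delta$-perturbation and hence survives in the limit $G$. To repair your argument you would need either to impose a Markov condition on $F_0$ (critical values equal to partition endpoints) and check that it propagates through all refinements, or---more simply---to iterate the full Lemma~\ref{lem:shadowingdense} construction as the paper does.
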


\begin{proof}Choose $\eps>0$. Let $g_0\in \mathrm{PA}_{\lambda(\mathrm{leo})}(I)$. We will show how to perturb $g_0$ to obtain a map $g\in C_{\lambda}(I)$ close to $g_0$ - it will be specified later - which has the limit shadowing property. We will proceed analogously as in the proof of Lemma~\ref{lem:shadowingdense}. In that proof for a given $\eps>0$ a perturbation of $f$ defining $F$ assumes a special finite partition $\P$ and related positive parameters $\gamma,\delta,m$. We will call the whole procedure (such a map $F$)  $(\eps,\P,\gamma,\delta,m)$-perturbation of $f$.
	
	Fix a decreasing sequence $(\eps_n)_{n\ge 1}$ of positive numbers such that
	
	\begin{equation}\label{e:18}\eps_1<\eps~\text{ and }~\eps_n\to 0,~n\to\infty.
	\end{equation}
	
	\noindent {\bf Step~1.} We put $f=g_0$ and consider
	$$F=g_1\text{ as } (\eps_1,\P_1,\gamma_1,\delta_1,m_1)-\text{perturbation of}~f.
	$$
	We assume that $g_0| P^1_i$ is monotone for each $P^1_i\in\P_1$. From Lemma~\ref{lem:shadowingdense} (1) it follows that $\rho(g_0,g_1)<\eps_1/2$ and Lemma~\ref{lem:shadowingdense} (2) implies that for each $g\in B(g_1,\delta_1)$ (hence also for $g_1$ itself) every $\delta_1$-pseudo orbit is $\eps_1$-traced. In addition we can require $\delta_1<\eps/2$.

	\noindent {\bf Step~2.} We put $f=g_1$ and consider
	$$F=g_2\text{ as } (\eps_2,\P_2,\gamma_2,\delta_2,m_2)-\text{perturbation of}~f.
	$$
	We assume that $g_1| P^2_i$ is monotone for each $P^2_i\in\P_2$. Moreover, we choose $\P_2$ to be a refinement of $\P_1$, i.e., each element of $\P_1$ is a union of some elements of $\P_2$. We consider $\gamma_2$ and $\delta_2$ so small that $$B(g_1,\delta_1)\supset B(g_2,\delta_2);$$
	Lemma~\ref{lem:shadowingdense} implies that for each $g\in B(g_2,\delta_2)$ (hence also for $g_2$ itself) every $\delta_2$-pseudo orbit is $\eps_2$-traced.

	\noindent {\bf Step~n.} We put $f=g_{n-1}$ and consider
	$$F=g_n\text{ as }(\eps_n,\P_n,\gamma_n,\delta_n,m_n\})-\text{perturbation of}~f.
	$$
	We assume that $g_{n-1}| P^n_i$ is monotone for each $P^n_i\in\P_n$ and choose $\P_n$ to be a refinement of $\P_{n-1}$. We consider $\gamma_n$ and $\delta_n$ so small that
	\begin{equation}\label{e:11}B(g_{n-1},\delta_{n-1})\supset B(g_n,\delta_n);\end{equation}
	Lemma~\ref{lem:shadowingdense}(2) implies that for each $g\in B(g_n,\delta_n)$ (hence also for $g_n$ itself) every $\delta_n$-pseudo orbit is $\eps_n$-traced.
	
	 The proof of Lemma~\ref{lem:shadowingdense} shows that for a fixed map $g\in B(g_n,\delta_n)$, for every $\delta_n$-pseudo orbit $(x_i)_{i\ge 0}$, if $x_i\in [a^n_{q(i)},a^n_{q(i)+1}]\in\P_n$ for each $i\ge 0$, there exists a sequence of intervals \begin{equation}\label{e:9}J^n_i\in \{[a^n_{q(i)},a^n_{q(i)}+\delta_n],[a^n_{q(i)+1}-\delta_n,a^n_{q(i)+1}]\}\end{equation}
	such that \begin{equation}\label{e:17}g(J^n_{i-1})\supset J^n_i\end{equation}
	and a point $z\in\bigcap_{i=0}^{\infty}g^{-i}(J^n_i)$ satisfies
	\begin{equation}\label{e:8}
	\vert g^i(z)-x_i\vert<\eps_n
	\end{equation}
	for each $i\ge 0$.
	
	By our construction, the convergence of the sequence $(g_n)_{n\geq 0}$ is uniform in $C_{\lambda}(I)$ hence $\lim_{n\to \infty}g_n=G\in C_{\lambda}(I)$. Moreover, since by (\ref{e:11}),
	$$G\in \bigcap_{n}B(g_n,\delta_n),$$ and by the previous the map $G$ has the shadowing property, i.e., for every $\eps>0$ there is $\delta>0$ such that every $\delta$-pseudo orbit is $\eps$-traced.
	
	Let us show that the map $G$ has the s-limit shadowing property. Due to the definition of s-limit shadowing let us assume that a sequence $(x_i)_{i\ge 0}$ is satisfying
    \begin{equation*}\label{e:12}
		\vert G(x_i)-x_{i+1}\vert\to 0,~i\to\infty.
		\end{equation*}
	Obviously there is an increasing sequence $(\ell(n))_{n\ge 1}$ of nonnegative  integers (w.l.o.g. we assume that $\ell(1)=0$, i.e., $(x_i)_{i\ge 0}$ is an asymptotic $\delta_1$-pseudo orbit) such that
	\begin{equation*}\label{e:10}
		\vert G(x_i)-x_{i+1}\vert<\delta_n,~i\ge \ell(n),
		\end{equation*}
	i.e., each sequence $(x_i)_{i\ge \ell(n)}$ is a $\delta_n$-pseudo orbit. Now we repeatedly use the procedure describe after the equation (\ref{e:11}) and containing the equations (\ref{e:9})-(\ref{e:8}). By that procedure, for each $n\in\N$ we can find sequences $(J^n_i)_{i\ge \ell(n)}$ (to simplify our notation on the $n$th level we index $J^n_i$ from $\ell(n)$) such that for each
	\begin{equation}\label{e:13}
		z\in\bigcap_{i=\ell(n)}^{\infty}G^{-i}(J^n_i)\ \ \ G^{\ell(n)}(z)\ \ \ \eps_n-\text{traces}~(x_i)_{i\ge \ell(n)}\text{ for }G.
	\end{equation}
	But by \eqref{eq:cover2} and \eqref{def:FLR} of Step 3 in the proof of Lemma \ref{lem:shadowingdense}, we have $G(J^n_i)=g_n(J^n_i)$ for each $n$ and $i$ and the sequence $(\P_n)_{n\ge 1}$ is nested, so by \eqref{def:FLR} of Step 3 in the proof of Lemma \ref{lem:shadowingdense}, \eqref{e:9} and \eqref{e:17} for each $n$ we get
		\begin{equation}\label{e:14}
		G(J^n_{\ell(n+1)-1})\supset J^{n+1}_{\ell(n+1)}.
		\end{equation}
		If we define a new sequence $(K_i)_{i\ge 0}$ of subintervals of $I$ by
	\begin{equation*}\label{e:15}
	K_i=J^n_i,~\ell(n)\le i\le \ell(n+1)-1,
	\end{equation*}
	then by (\ref{e:14}) the intersection
	\begin{equation*}\label{e:16}
		K=\bigcap_{i=0}^{\infty}G^{-i}(K_i)
		\end{equation*}
	is nonempty. It follows from (\ref{e:13}) and (\ref{e:18}) that for each $z\in K$, $\vert G^i(z)-x_i\vert\to 0$, $i\to\infty$.
	If asymptotic pseudo orbit was $\delta$-pseudo orbit at start, then the choice of intervals $J^1_i$ in the first step ensures $\eps$-tracing.
	
	In order to finish the proof let us recall that we have chosen $\eps_1<\eps$ and $\delta_1<\eps/2$ hence
	$$
	\rho(g_0,G)<\rho(g_0,g_1)+\rho(g_1,G)<\eps/2+\eps/2=\eps.
	$$
	Since the set $\mathrm{PA}_{\lambda(\mathrm{leo})}(I)$ is dense in $C_{\lambda}(I)$, the conclusion of our theorem follows.
\end{proof}

\section*{Acknowledgements}J. Bobok was supported by the European Regional Development Fund, project No.~CZ 02.1.01/0.0/0.0/16\_019/0000778.
J. \v Cin\v c was supported by the FWF Schrödinger Fellowship stand-alone project J 4276-N35.
P. Oprocha was supported by National Science Centre, Poland (NCN), grant no. 2019/35/B/ST1/02239.

\end{document}